\newtheorem{thm}{Theorem}[section]
\newtheorem{lem}[thm]{Lemma}
\newtheorem{rem}{Remark}[section]
\newtheorem{defn}{Definition}[section]
\numberwithin{equation}{section}
\newcommand{\Balpha}{\mbox{$\hspace{0.12em}\shortmid\hspace{-0.62em}\alpha$}} %blackboard alpha
\def\Pb{\ifmmode{\Bbb P}\else{$\Bbb P$}\fi}
\def\Z{\ifmmode{\Bbb Z}\else{$\Bbb Z$}\fi}
\def\C{\ifmmode{\Bbb C}\else{$\Bbb C$}\fi}
\def\R{\ifmmode{\Bbb R}\else{$\Bbb R$}\fi}
\def\S{\ifmmode{S^2}\else{$S^2$}\fi}
\def\S{\cal S}
\newenvironment{pf}{\paragraph{Proof:}}{\hfill$\square$ \newline}
\begin{document}

\title[unknottedness]{An unknottedness result for noncompact self shrinkers}
\author{Alexander Mramor}
\address{Department of Mathematics, Johns Hopkins University, Baltimore, MD, 21231}
\email{amramor1@jhu.edu}

\begin{abstract} In this article we extend an unknottedness theorem for compact self shrinkers to the mean curvature flow to shrinkers with finite topology and one asymptotically conical end, which conjecturally comprises the entire set of self shrinkers with finite topology and one end. The mean curvature flow itself is used in the argument presented.
\end{abstract}
\maketitle
\section{Introduction}

Self shrinkers are the most basic singularity models to the mean curvature flow and hence are an important topic of study. In this article we extend (and reprove) the results of \cite{MW1}, where the author with S. Wang showed compact self shrinkers in $\R^3$ are topologically standard, to include some noncompact self shrinkers:

\begin{thm}\label{thm:unknotted} Let $M^2 \subset \R^3$ be a two-sided embedded self shrinker which is either compact or has a single, asymptotically conical, end. Then $M$ is topologically standard. 

\end{thm}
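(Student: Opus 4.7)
The compact case is (a reproof of) the theorem of \cite{MW1}, so the essentially new content is the case where $M$ has a single asymptotically conical end with cone $\mathcal{C} \subset \R^3$. The plan is to reduce this noncompact setting to the compact one via a truncation-and-capping argument driven by the self-similar mean curvature flow of $M$.

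First I would set up the truncation. Because $M$ is asymptotically conical, for $R$ sufficiently large $M \setminus B_R$ is a small normal graph over $\mathcal{C} \setminus B_R$, so a direct deformation of $\R^3$ supported in a thin annulus about $\partial B_R$ produces an ambient-isotopic surface $M'$ that agrees exactly with $\mathcal{C}$ outside $B_R$. It therefore suffices to show $M'$ is topologically standard. Next I would invoke the self-similar flow $M_t = \sqrt{1-t}\, M$, which for $t \in [0,1)$ is a smooth MCF and hence provides, for each $t$, an ambient isotopy between $M$ and its rescaled copy $\sqrt{1-t}\, M$. As $t \to 1$ the genuine topology of $M$ concentrates in an arbitrarily small neighborhood of the origin, while outside that neighborhood the flowed surface is uniformly close to the fixed cone $\mathcal{C}$. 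After such a large rescaling, the earlier truncation lets us cap off the outer conical end of $M_t$ with a standardly embedded disk far from the core, producing a closed genus-$g$ surface $\widehat M$; applying the compact unknottedness theorem of \cite{MW1} to $\widehat M$ shows it is topologically standard, and unwinding the capping and MCF isotopies yields the desired ambient isotopy of the original $M$.

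The main obstacle will be the final passage from unknottedness of $\widehat M$ back to an ambient isotopy of the original noncompact $M$. The capped surface $\widehat M$ is not itself a self-shrinker, so the compact result is only a topological input about the closed surface and cannot be invoked as a black box \emph{during} the flow. One must additionally show that any two embeddings of a genus-$g$ surface in $\R^3$ which agree outside a large ball and are standardly embedded inside are ambient isotopic via an isotopy supported in that ball; this requires an Alexander-type handlebody argument together with careful avoidance-principle estimates for mean curvature flow in the noncompact configurations involved, in particular keeping the flow away from the conical end, possibly using the self-expander from $\mathcal{C}$ as a barrier to guarantee that all isotopies can be made trivial near infinity.
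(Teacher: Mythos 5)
There is a genuine gap, and it sits at the heart of your reduction: the step ``applying the compact unknottedness theorem of \cite{MW1} to $\widehat M$ shows it is topologically standard'' is not available. The theorem of \cite{MW1} is a geometric statement about compact \emph{self-shrinkers}; it is proved using the shrinker equation (instability in the Gaussian metric, a perturbation, and a Frankel/stable-minimal-surface contradiction), not by topology of closed surfaces alone. Your capped surface $\widehat M$ is just some closed genus-$g$ surface in $\R^3$ --- it satisfies no equation --- and a general closed genus-$g$ surface is of course allowed to be knotted (a tubular neighborhood of a knotted circle, capped off or connect-summed with a plane far away, agrees with a standard configuration outside a large ball and is still topologically nonstandard). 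Nothing in the truncation, the self-similar rescaling $M_t=\sqrt{-t}\,M$, or the asymptotically conical structure at infinity constrains the topology of the compact core: knottedness is scale-invariant, so ``concentrating the topology near the origin'' and capping the conical end does not make the surface any less knotted. In effect your argument, if it worked, would prove that \emph{every} one-ended surface with a conical end is standard, which is false; the shrinker hypothesis must enter analytically, and in your scheme it never does once you pass to $\widehat M$.

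For comparison, the paper keeps the shrinker equation in play throughout: assuming $M$ is nonstandard, Lawson's criterion/Waldhausen-type results give that the universal cover of one of the complementary regions $R_{in}$, $R_{out}$ has disconnected boundary; instability of shrinkers in the Gaussian metric (Bernstein--Wang eigenfunction perturbation) produces a shrinker mean convex $M^\epsilon\in\Sigma$ pushed into that region; Theorem \ref{LSF} provides a renormalized level set flow of $M^\epsilon$, realized as a limit of surgery flows; lifting to the universal cover, a mod $2$ intersection-number argument with a compact curve $\gamma$ joining two boundary components (Lemma \ref{props}, checked across surgeries) shows the flow is trapped and has a nonempty limit, which by White's theory is a stable self-shrinker of polynomial volume growth disjoint from $M$ --- contradicting nonexistence of such stable shrinkers (or the Frankel property). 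If you want a route closer in spirit to ``reduce to a compact problem,'' the viable versions are the ones discussed in the paper's concluding remarks (Meeks--Simon--Yau minimization in exhaustions with perturbed metrics, or Meeks' Waldhausen theorem for splittings of a ball with one boundary component), but all of them still require finding a minimal object for the Gaussian metric and ruling it out geometrically, which is exactly what your capping step skips.
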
 

The definition of standard embeddedness is given in section 4 below but  in layman's terms it essentially means that a surface is ``unknotted'': for example tubular neighborhoods of knotted $S^1 \subset \R^3$ are not topologically standard. From theorem 2 of Brendle's paper \cite{Bren} noncompact shrinkers where any two curves have vanishing mod 2 intersection number must be the cylinder or plane and hence unknotted; otherwise to the author's knowledge no other unknottedness results for noncompact self shrinkers prior to this one had been shown without some extra symmetry or curvature convexity assumptions on the shrinker, like mean convexity. 
$\medskip$

From the desingularization of the sphere and plane by Kapouleas, Kleene, and M{\o}ller \cite{KKM} we see that the set of asymptotically conical shrinkers with one end and finite topology contains elements with nontrivial topology. In \cite{I2} Ilmanen conjectured that a self shrinker with a cylindrical end must itself be the round shrinking cylinder, so by L. Wang's theorem \cite{Lu} on the ends of noncompact self shrinkers with finite topology in the conjectural picture our result covers all self shrinkers with one end and finite topology. Note another of result of theirs \cite{Lu1}, giving rigidity for when a shrinker is very quickly asymptotic to a cylinder, provides some concrete evidence supporting Ilmanen's conjecture. 
$\medskip$

%In \cite{I} Ilmanen conjectured that a self shrinker with one end, so in light of L. Wang's theorem on the ends of noncompact self shrinkers with finite topology it seems reasonable that this theorem covers all self shrinkers with one end and finite topology. 
%$\medskip$

The germ of the main topological argument in this article, employing Waldhausen's theorem, can be found in the important paper of Lawson \cite{L} for minimal surfaces in $S^3$. There as discussed more below a criterion which implies unknottedness of minimal embeddings is given which can be verified using Frankel's theorem, that two minimal surfaces in a space of positive Ricci curvature must intersect. Frankel theorems for self shrinkers also hold, but for technical reasons an argument appealing directly to them seem to encounter some difficulties at least until the end of our proof. These issues and related literature and techniques in the classical minimal surface case are discussed in some depth in the concluding remarks below.
$\medskip$

 To work around these issues we will use the (renormalized) mean curvature flow. In a nutshell, its use will be that if a shrinker satisfying the conditions of theorem \ref{thm:unknotted} is in fact topologically nonstandard, we may perturb it appropriately and use the flow described in the statement below. The shrinker being nonstandard gives by an intersection number argument in the universal cover of one of its bounded components that the flow is nonempty and in particular intersects some bounded domain of $\R^3$ for all times. By deep work of White, in the limit as $t \to \infty$ we then find another shrinker which must have impossible properties for multiple reasons: stable in the Gaussian metric but polynomial volume growth, additionally one which is disjoint from the original shrinker. A potential alternate route along the same lines instead using Meeks, Simon, Yau \cite{MSY} to find an ``impossible shrinker'' as in the sketch above, by minimization in isotopy class, is also discussed in the concluding remarks. The construction of the flow we use to carry this scheme out is the following shown in section 3: 
\begin{thm}\label{LSF} Let $M \subset \R^3$ be an asymptotically conical surface such that $H - \frac{\langle X, \nu \rangle}{2} \geq c(1 + |X|^2)^{-\alpha}$ for some constants $c, \alpha >0$ and choice of normal, and so that $|A(q)|^2 \to 0$ as $R \to \infty$ for any $q \in M \cap B(p, R)^c$. Then denoting by $K$ the region bounded by $M$ whose outward normal corresponds to the choice of normal on $M$, the level set flow $M_t$ of $M$ with respect to the renormalized mean curvature flow satisfies
\begin{enumerate}
\item inward in that $K_{t_1} \subset K_{t_2}$ for any $t_1 > t_2$, considering the corresponding motion of $K$.
\item $M_t$ is the Hausdorff limit of surgery flows $S_t^k$ with initial data $M$. 
\item $M_t$ is a forced Brakke flow (with forcing term given by position vector). 
\end{enumerate}
\end{thm}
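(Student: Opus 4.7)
The plan is to establish items (1), (2), and (3) by constructing the level set flow as a limit of surgery-modified renormalized mean curvature flows, with inwardness coming from the preserved shrinker mean convexity. I would begin with item (1): along the classical renormalized mean curvature flow $\partial_t X = -v\nu$ with $v := H - X^\perp/2$, a direct computation gives an evolution equation of the form $\partial_t v = \Delta v + (|A|^2 + \tfrac{1}{2})v$. Since $v \geq c(1+|X|^2)^{-\alpha} > 0$ initially, and the asymptotically conical geometry together with $|A|^2 \to 0$ at infinity supplies barriers that prevent $v$ from touching zero in finite time at spatial infinity, a maximum principle argument in the noncompact setting shows $v > 0$ is preserved while the flow is smooth. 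This yields strict inward motion of the classical flow; upgrading to the level set flow (defined as the largest one-parameter family of closed sets with the avoidance property) then gives $K_{t_1} \subset K_{t_2}$ for $t_1 > t_2$.

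For items (2) and (3), I would adapt the surgery theory developed by Haslhofer--Kleiner for mean convex mean curvature flow to the setting of the renormalized flow with shrinker mean convex initial data. The first step is to approximate $M$ by a sequence of compact shrinker mean convex hypersurfaces $M^k$ obtained by truncating at radii $R_k \to \infty$ and closing off with caps built from the conical structure at infinity; the hypotheses $|A|^2 \to 0$ and the lower bound on $v$ ensure one can carry this out while preserving shrinker mean convexity, up to a small loss that vanishes as $k \to \infty$. The surgery flows $S_t^k$ are then constructed in the standard pattern --- run the smooth renormalized flow until a canonical neighborhood develops, perform surgery at a sufficiently narrow neck, and iterate --- with the shrinker mean convex estimates needed to control the surgery parameters obtained by absorbing the forcing term $\tfrac{1}{2}X^\perp$ as a lower-order perturbation of the mean convex setting. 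One then shows these surgery flows converge in Hausdorff distance to the level set flow of $M$, and since each $S_t^k$ is a forced Brakke flow and the class of such flows is closed under weak convergence of Radon measures, the limit $M_t$ is itself a forced Brakke flow.

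The main obstacle I expect is controlling the flow near spatial infinity uniformly across surgery times in the noncompact setting: one must verify that the truncation approximations and surgery approximations can be made compatible so that no mass escapes to infinity in finite time and so that the shrinker mean convexity estimates remain quantitative enough to set up subsequent surgeries at all scales simultaneously. Secondary technical difficulties include tuning the surgery thresholds as functions of position $|X|$ (so that surgery in the far-out conical region does not conflict with the weaker pointwise lower bound on $v$ there), and establishing the Brakke-type inequality at surgery times with the additional forcing term present; both, however, should follow from careful bookkeeping within the Haslhofer--Kleiner framework rather than requiring a genuinely new idea.
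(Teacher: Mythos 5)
Your overall architecture (shrinker mean convexity preserved, surgery flows approximating the level set flow, Brakke property in the limit) matches the paper's, but two of your key steps have genuine gaps. First, the central difficulty you never resolve is how to run Haslhofer--Kleiner-type surgery when the noncollapsing quantity must be taken with respect to $F = H - \frac{X^\perp}{2}$ and $F \to 0$ along the conical end. Your truncation-and-capping step does not fix this: to keep $F \geq 0$ on a cap at radius $R_k$ you need $H \gtrsim R_k$ there, and on the conical part the bound $F \geq c(1+|X|^2)^{-\alpha}$ only forces osculating balls of radius $\alpha/F$, which is polynomially large in $|X|$, while an asymptotically conical surface admits osculating balls of radius at most $O(|X|)$; hence the $F$-noncollapsing constant of your capped surfaces degenerates as $R_k \to \infty$, the surgery thresholds are not uniform in $k$, and the double limit (truncation radius and surgery parameters) underlying your Hausdorff-convergence claim is never set up. The paper avoids truncation entirely: pseudolocality (transferred to the RMCF through the reparameterization) confines singularity formation to a bounded ball on each finite time interval, $F$-noncollapsing is obtained on an annulus where $F$ has a positive lower bound and propagated inward via Andrews' two-point function, and the surgery is localized inside the ball in the style of Haslhofer--Ketover, with parameters rechosen on each interval $[k,k+1]$. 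Relatedly, your assertion that the surgery flows converge in Hausdorff distance \emph{to the level set flow} is exactly where the noncompact setting bites (noncompact level set flows can be pathological); the paper needs a noncompact squeezing/avoidance theorem in the style of Hershkovits--White, proved with compact barrier approximations and a uniqueness/pseudolocality lemma, and your ``upgrade'' of item (1) from the smooth flow to the level set flow silently uses the same missing noncompact avoidance statement.

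Second, your route to item (3) would fail as stated: a surgery flow is \emph{not} a forced Brakke flow, since at a surgery time caps are glued in and the local Brakke inequality need not hold there (the paper calls them only ``nearly'' Brakke flows); the results of Lauer and Head recover a Brakke/level set flow only in the limit of degenerating surgery parameters, and in the compact mean convex setting. So you cannot simply invoke closedness of the class of forced Brakke flows under weak convergence applied to the $S^k_t$. The paper instead adapts White's argument: it builds a Brakke flow as a limit of smooth flows of graphs of $kf$ (Ecker--Huisken keeps them graphical, asymptotic conicality bounds the Gaussian densities), arranges the limit over $\widetilde{M} \times \R$ to split off a line, takes cross-sections, and uses the nonfattening already established to identify the cross-sectional flow with $M_t$, then transfers the argument to the renormalized setting. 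Some argument of this kind (or an appeal to the corresponding part of Chodosh--Choi--Mantoulidis--Schulze) is needed to close item (3).
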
 
The definitions of weak flows needed are given as they come up in section 3. The level set flow for compact sets under the renormalized flow (and with more general forcing terms) has been well studied, see for example the work of Hershkovits and White \cite{HW1} and also \cite{HW} by the same authors for a use of the renormalized flow in studying the homotopy groups of self shrinkers. The renormalized mean curvature flow on noncompact hypersurfaces seems to be less well studied/used in situations where singularity formation is not ruled out; here we do so in a rather particular context where only moderate adjustements are needed to adapt the techniques from the compact case. Most relevent is the work of Choi, Chodosh, Mantoulidis, Schulze \cite{CCMS} in their construction of a generic mean curvature flow and in fact below we adapt some of their arguments to our situation, particularly in showing item (3) above. The explicit realization of the level set flow as a limit of surgery flows, which was noted in the compact case by Lauer and Head \cite{Lau, Head1} is useful to show the flow is nonempty in its ultimate application to the proof of theorem \ref{thm:unknotted}.
$\medskip$

$\textbf{Acknowledgements:}$ The author is grateful to Bill Minicozzi for asking him about unknottedness for noncompact shrinkers at the 2018 Barrett lectures held at the University of Tennessee--Knoxville, which inspired this work. He also thanks Jacob Bernstein, Letian Chen, Martin Lesourd, Peter McGrath, Joel Spruck, Ao Sun, Ryan Unger, Shengwen Wang, and Jonathan Zhu for stimulating discussions and feedback during various stages of this project. The author also thanks the anonymous referees for their thoughtful reviews whose questions and critique helped improve the article.

\section{A brief introduction the mean curvature flow}

In this section we discuss some facts concerning the mean curvature flow and self shrinkers which we will use below, though we elect to postpone some other background material to other sections where they seem to fit more naturally into the discussion. Let $X_0: M \to N^{n+1}$ be an embedding of $M$ realizing it as a smooth closed hypersurface of $N$, which by abuse of notation we also refer to as $M$. Then the mean curvature flow of $M_t$ is given by the image of $X: M \times [0,T) \to N^{n+1}$ satisfying the following where $\nu$ is the outward normal: 
\begin{equation}\label{MCF equation}
\frac{dX}{dt} = \vec{H} = -H \nu, \text{ } X(M, 0) = X_0(M)
\end{equation}
 This is an interesting flow to consider for a variety of reasons, for example as a tool in topology -- for a survey see \cite{survey}. There is a comparison principle for the mean cuvature flow which says that any two flows where one is compact must stay disjoint if they are initially, and using it by enclosing any compact hypersurface in $\R^{n+1}$ with a sufficiently large sphere it is easy to see that singularities are common for mean curvature flows in Euclidean space. Generically the only noncompact singularities encountered will be modeled on round cylinders: generic mean curvature flow in $\R^3$ has been already rather well developed (see \cite{CM} and \cite{CCMS}) although there still might be situations where one is forced to consider ``exotic'' singularities, for example in potential applications of the flow to a family of surfaces considered simultaneously. 
$\medskip$

 To study these singularities, one may perform a $\textit{tangent flow blowup}$ which, as described by Ilmanen in his preprint \cite{I} for flows of surfaces in $\R^3$, will be modeled on smooth self shrinkers possibly with some mutiplicity: these are hypersurfaces satisfying the following equivalent definitions: 
 \begin{enumerate}
\item $M^n \subset \R^{n+1}$ which satisfy $H - \frac{\langle X, \nu \rangle}{2} = 0$, where $X$ is the position vector.
\item  Minimal hypersurfaces in the Gaussian metric $G_{ij} = e^{\frac{-|x|^2}{2n}} \delta_{ij}$.
\item Hypersurfaces $M$ which give rise to ancient flows $M_t$ that move by dilations by setting $M_t = \sqrt{-t} M$, $t < 0$. 
\end{enumerate}

Of course, as the degenerate neckpinch of Angenent and Velasquez \cite{AV} illustrate tangent flows do not capture quite all the information about a developing singularity but they are a natural starting point and a lot can be said about the singular set by considering them. Taking the viewpoint of them as minimal surfaces its important to note the Gaussian metric is poorly behaved in many regards; it is incomplete and by the calculations in \cite{CM1} its scalar curvature at a point $x$ is given by:
\begin{equation}
R = e^{\frac{|x|^2}{2n}}\left( n+ 1 - \frac{n-1}{4n} |x|^2 \right)
\end{equation}
We see that as $|x| \to \infty$ the scalar curvature diverges, so there is no way to complete the metric. Also since $R$ is positive for $|x|$ small and negative for large $|x|$, there is no sign on sectional or Ricci curvatures. On the other hand it is $f$-Ricci positive, in the sense of Bakry and Emery with $f = -\frac{1}{2n} |x|^2$, suggesting it should satisfy many of the same properties of true Ricci positive metrics (see \cite{WW}). Indeed, this provides some idea as to why one might expect an unknottedness result for self shrinkers, because analogous unknottedness results hold in Ricci positive metrics on $S^3$ as discussed later.
$\medskip$

By this analogy as one might expect it turns out that there are no stable minimal surfaces of polynomial volume growth in $\R^n$ endowed with the Gaussian metric as discussed in \cite{CM1}. To see why this is so, the Jacobi operator for the Gaussian metric is given by: 
\begin{equation}
L = \Delta + |A|^2 - \frac{1}{2} \langle X, \nabla(\cdot) \rangle + \frac{1}{2}
\end{equation} 
The extra $\frac{1}{2}$ term is essentially the reason such self shrinkers unstable in the Gaussian metric: for example owing to the constant term its clear in the compact case from this that one could simply plug in the function ``1'' to get a variation with $Lu >0$ which doesn't change sign implying the first eigenvalue is negative. 
$\medskip$

To deal with this instability, in \cite{CM} Colding and Minicozzi introduced their entropy functional which is essentially an area that mods out by translations and dilations. They define the entropy $\lambda(M)$ of $M^n \subset \R^{n+1}$ to be:
\begin{equation}
\lambda(M) = \sup\limits_{x_0, r} F_{x_0, r}(M)
\end{equation}
where the functionals $F_{x_0, r}$ are Gaussian areas shifted by $x_0$ and rescaled by $r$ -- although it doesn't concern us there are indeed entropy stable shrinkers namely round spheres and cylinders. What does concern us is that the entropy by Huisken monotonicity \cite{H} is nonincreasing under the flow and as shown lemma 2.9 in \cite{CM} a surface with finite entropy has polynomial volume growth. And in fact, every properly embedded shrinker has polynomial volume growth by Q. Ding and Y.L. Xin: 
\begin{thm}[Theorem 1.1 of \cite{DX}]\label{proper} Any complete non-compact properly immersed self-shrinker $M^n$ in $\R^{n+m}$ has Euclidean volume growth at most. \end{thm}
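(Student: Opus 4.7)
The plan is to reduce the problem to showing the Gaussian integral $F_{0,1}(M) := (4\pi)^{-n/2}\int_M e^{-|X|^2/4}\,d\mathcal{H}^n$ is finite, and then to use Huisken's monotonicity formula to upgrade this into the desired bound $V(r) \leq C r^n$ at every scale, where $V(r) := \mathcal{H}^n(M \cap B_r)$.

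First I would observe that if $F_{0,1}(M)$ is finite, then Huisken's monotonicity applied to the ancient mean curvature flow $M_t := \sqrt{-t}\,M$ with basepoint $(0,t_0)$ for $t_0 > 0$, together with the scaling identity $F_{0,r}(sM) = F_{0,r/s}(M)$, implies that $r \mapsto F_{0,r}(M)$ is non-increasing on $[1,\infty)$. Combined with the elementary pointwise bound $e^{-|X|^2/(4r^2)} \geq e^{-1/4}$ on $M \cap B_r$, this yields $V(r) \leq C(n)\,F_{0,1}(M)\,r^n$ for every $r \geq 1$, which is exactly the Euclidean volume growth claimed.

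The main task is therefore to establish finiteness of $F_{0,1}(M)$ for a properly immersed self-shrinker. For this I would use the drifted Laplacian $\mathcal{L} := \Delta_M - \tfrac{1}{2}\langle X,\nabla\cdot\rangle$, which is self-adjoint with respect to $e^{-|X|^2/4}d\mathcal{H}^n$ on $M$. The self-shrinker equation $\vec{H} = -X^\perp/2$ is equivalent to $\mathcal{L}X = -\tfrac{1}{2}X$, whence $\mathcal{L}|X|^2 = 2n - |X|^2$. Testing this identity against a smooth cutoff $\phi_R$ with $\phi_R = 1$ on $B_R$, $\phi_R = 0$ outside $B_{2R}$, and $|\nabla\phi_R| \leq C/R$, and integrating by parts (which is permissible because properness makes $M \cap B_{2R}$ compact, with no boundary-at-infinity contribution) gives
\[
\int_{M \cap B_R} |X|^2 e^{-|X|^2/4} \leq 2n \int_{M \cap B_{2R}} e^{-|X|^2/4} + (\text{cutoff error}),
\]
and a dyadic iteration in $R$ should yield a uniform-in-$R$ bound on $\int_{M \cap B_R} e^{-|X|^2/4}$.

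The main obstacle is closing this iteration: the naive inequality only relates consecutive dyadic scales without producing the decay needed to iterate indefinitely. Overcoming this requires trading powers of $R$ against $|\nabla\phi_R|$, absorbing a fraction of the $|X|^2$-weighted integral into the right-hand side via Cauchy-Schwarz, and exploiting that $|X|^2 e^{-|X|^2/4}$ dominates $e^{-|X|^2/4}$ outside a fixed compact set. A more geometric alternative, which avoids the Gaussian machinery entirely, is to apply the divergence theorem on $M \cap B_r$ directly to the unweighted identity $\Delta_M |X|^2 = 2n - |X^\perp|^2$: this produces the clean fundamental identity $nV(r) = \int_{M \cap \partial B_r} |X^T|\,d\mathcal{H}^{n-1} + \tfrac{1}{2}\int_{M \cap B_r}|X^\perp|^2\,d\mathcal{H}^n$, which, together with the coarea formula $V'(r) = \int_{M\cap \partial B_r} r/|X^T|\,d\mathcal{H}^{n-1}$ and a dyadic iteration across annuli, yields the bound $V(r) \leq C r^n$ directly.
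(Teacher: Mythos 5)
First, note that the paper does not prove this statement at all: it is quoted verbatim as Theorem 1.1 of \cite{DX} (Ding--Xin) and used as a black box, so there is no internal proof to compare against; I can only assess your sketch on its own merits, and as written it has two genuine gaps. The first is in your reduction step. The claim that finiteness of $F_{0,1}(M)$ plus Huisken monotonicity gives $r\mapsto F_{0,r}(M)$ non-increasing (equivalently, the Colding--Minicozzi fact that a shrinker's entropy is attained by $F_{0,1}$) is proved in the literature \emph{under} a polynomial volume growth hypothesis, which is exactly what is to be established here: for a noncompact flow, Huisken's formula requires justification of the integrations by parts and of convergence of the Gaussian integrals at other scales, and $F_{0,1}(M)<\infty$ alone does not even guarantee $F_{0,r}(M)<\infty$ for $r>1$, since a priori the area of $M$ in the annulus $B_{R+1}\setminus B_R$ could grow like $e^{R^2/4}$. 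So as stated this step is circular. The second gap is the one you flag yourself: your cutoff identity for $\mathcal{L}|X|^2=2n-|X|^2$ ties the weighted integral at scale $R$ to the one at scale $2R$ with a constant $2n>1$ and no gain, and growth of order $e^{R^2/4}$ is perfectly consistent with every such two-scale inequality, so no amount of Cauchy--Schwarz absorption or trading powers of $R$ will close the iteration. The device that actually works is to keep the exact boundary term rather than a cutoff: applying the divergence theorem on $M\cap B_r$ (compact by properness) to the vector field $e^{-|X|^2/4}X^T$, one has $\operatorname{div}_M\bigl(e^{-|X|^2/4}X^T\bigr)=\bigl(n-\tfrac{1}{2}|X|^2\bigr)e^{-|X|^2/4}$ while the boundary term $\int_{M\cap\partial B_r}e^{-|X|^2/4}|X^T|$ is nonnegative, so $\int_{M\cap B_r}\bigl(\tfrac{1}{2}|X|^2-n\bigr)e^{-|X|^2/4}\le 0$ for every $r$; splitting at $|X|=2\sqrt{n}$ and using $\mathcal{H}^n(M\cap B_{2\sqrt n})<\infty$ then bounds $\int_M e^{-|X|^2/4}$ uniformly. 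Your cutoff discards precisely this sign information.

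The ``more geometric alternative'' in your last sentence is indeed the correct starting point (it is essentially how Ding--Xin and Cheng--Zhou argue), and your identity $nV(r)=\int_{M\cap\partial B_r}|X^T|\,d\mathcal{H}^{n-1}+\tfrac12\int_{M\cap B_r}|X^\perp|^2\,d\mathcal{H}^n$ and the coarea formula are both right. But observe that these relations naturally produce \emph{lower} bounds on $V'$ (since $|X^T|\le r$ on $M\cap\partial B_r$ gives $1/|X^T|\ge 1/r$), whereas the theorem requires an upper bound $V(r)\le Cr^n$; the passage from these identities to the doubling-type estimate that can be iterated is exactly the nontrivial content of the Ding--Xin/Cheng--Zhou proof, and ``a dyadic iteration across annuli yields the bound directly'' is an assertion, not an argument. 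In short: you have identified the right objects (the drift Laplacian, $\mathcal{L}|X|^2=2n-|X|^2$, the first-variation identity, coarea), but both routes leave the decisive step unproved --- the first because the monotonicity of $F_{0,r}$ presupposes the volume growth, the second because the mechanism that converts the divergence identity into an $r^n$ rate is missing.
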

We will combine these facts below to conclude the self shrinker we find via the renormalized flow is unstable in the Gaussian metric. Now we discuss some terminology describing possible behavior of the ends: 
$\medskip$

A \emph{regular cone} in $\Bbb R^3$ is a surface of the form $C_\gamma=\{r\gamma\}_{r\in (0,\infty)}$ where $\gamma$ is smooth simple closed curve in $S^2$. An end of a surface $M^2\hookrightarrow \R^3$ is \emph{asymptotically conical} with asymptotic cross section $\gamma$ if $\rho M\to C_\gamma$ in the $C^2_\mathrm{loc}$ sense of graphs as $\rho\searrow 0$ restricted to that end. 
$\medskip$

Similarly we define \emph{asymptotically cylindrical} ends to be ends which are asymptotically graphs over cylinders (with some precsribed axis and diameter) which converge to that cylinder in $C^2_{loc}$ on that end. The reason we focus on such ends is the following important result of L. Wang, which says that these are the only possible types of ends which may arise in the case of finite topology: 
\begin{thm}[Theorem 1.1 of \cite{Lu}]\label{Lu-ends}

If M is an end of a noncompact self-shrinker in $\R^3$ of finite topology, then either of the following holds:
\begin{enumerate}
\item $\lim_{\tau \to \infty} \tau^{-1} M = C(M)$ in $C_{loc}^\infty(\R^3 \setminus 0)$ for $C(M)$ a regular cone in $\R^3$
\item $\lim_{\tau \to \infty} \tau^{-1} (M - \tau v(M)) = \R_{v(M)} \times S^1$ in $C_{loc}^\infty(\R^3)$ for a $v(M) \in \R^3 \setminus \{0\}$
\end{enumerate}
\end{thm}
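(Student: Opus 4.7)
The plan is to classify the asymptotic geometry of the end by studying blowdowns $M_\tau := \tau^{-1} M$ as $\tau \to \infty$ and letting the finite-topology hypothesis pin down the possible limits as either a regular $2$-dimensional minimal cone, giving case (1), or a $1$-dimensional collapsed limit along some axis that, after an appropriate translation, captures an asymptotic round cylinder, giving case (2).

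First I would derive the equation satisfied by $M_\tau$ at $Y = X/\tau$: the shrinker equation $H = \langle X, \nu\rangle/2$ rescales to $H_{M_\tau}(Y) = (\tau^2/2)\langle Y, \nu\rangle$. By Ding--Xin (Theorem \ref{proper}), $M$ has Euclidean volume growth, so $\{M_\tau\}$ has uniformly bounded area on balls and admits a subsequential varifold limit $V_\infty$. For $V_\infty$ to have locally bounded generalized mean curvature one needs $\langle Y, \nu\rangle = O(\tau^{-2})$ on compact sets, so $V_\infty$ is stationary with $Y$ tangent at regular points, i.e., a stationary cone based at the origin. Brakke-type regularity combined with this mean-curvature bound upgrades the convergence to smooth away from the vertex and from any higher-multiplicity locus. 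The finite-topology hypothesis now constrains the cone: finite genus and finitely many ends imply that, on the end in question, $M \cap B_R^c$ is a topological annulus for large $R$, and each $M_\tau$ on a fixed annular shell of $\R^3 \setminus \{0\}$ remains an embedded annulus; this excludes higher multiplicity (stacked sheets would produce infinite genus) and branch points. Consequently $V_\infty$, restricted to the end, is either a smooth regular cone $C_\gamma$ for some $C^\infty$ embedded $\gamma \subset S^2$ or it collapses dimensionally to a ray $\R_{\ge 0} v(M)$. The former alternative gives (1) after upgrading the convergence to $C^\infty_{\mathrm{loc}}$ by writing $M_\tau$ graphically over $C_\gamma$ and applying interior Schauder estimates to the graphical shrinker equation, whose inhomogeneous terms are $O(\tau^{-2})$.

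The remaining, and I expect principal, case is the ray collapse, which means the unrescaled end is a tube around $\R_{\ge 0} v(M)$ with sublinear cross-sectional radius. To resolve the transverse geometry I would use the translation-and-rescaling $\tau^{-1}(M - \tau v(M))$ as in the statement: in these coordinates the shrinker equation combined with sublinearity in the $v(M)^\perp$ directions forces the limit normal to be perpendicular to $v(M)$, so the limit is translation-invariant along $v(M)$; finite topology of the end then leaves only the round cylinder $\R_{v(M)} \times S^1$, and regularity upgrades the convergence to $C^\infty_{\mathrm{loc}}$. Uniqueness of $v(M)$ itself requires a separate argument; applying Huisken monotonicity to translates of the associated ancient flow $\sqrt{-t}\,M$ and characterizing $v(M)$ as the unique minimizer of a translated Gaussian area functional along the end is a natural route. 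The main obstacle I anticipate is precisely this last step, canonically identifying $v(M)$ and ruling out subsequential drift or multi-axis behavior; any proof must exploit finite topology beyond the multiplicity-1 reduction, most likely by coupling entropy-monotonicity along the end with cylindrical rigidity in the style of Colding--Ilmanen--Minicozzi to pin down a unique asymptotic axis.
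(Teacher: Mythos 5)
First, a point of orientation: Theorem \ref{Lu-ends} is not proved in this paper at all; it is quoted verbatim as Theorem 1.1 of L.\ Wang \cite{Lu}, a substantial stand-alone result whose proof runs through the associated self-similar Brakke flow $t\mapsto\sqrt{-t}\,M$, tangent-flow analysis at time $0$ (using Ilmanen's smoothness and genus results for tangent flows of surface flows \cite{I}, White's local regularity \cite{W1}, and the classification of the relevant smooth shrinkers \cite{Bren}), and, in the cylindrical alternative, the uniqueness-of-cylindrical-tangent-flow machinery of Colding--Minicozzi \cite{CM2} together with arguments in the spirit of \cite{Lu1}. So your sketch should be measured against that, and it falls short in two concrete places. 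The first is multiplicity one of the blowdown. Your claim that, on a fixed annular shell, ``stacked sheets would produce infinite genus'' and hence embeddedness of the annular end forces the limit cone to have multiplicity one is false as a topological principle: a single embedded annular end can perfectly well blow down to a multiplicity-two plane (the catenoid does exactly this in the minimal-surface setting), with no genus created. Ruling out such doubling for shrinker ends is one of the genuinely hard points and requires the equation and the flow analysis, not finite topology alone. Relatedly, your derivation of stationarity of $V_\infty$ is circular: you observe that locally bounded first variation ``needs'' $\langle Y,\nu\rangle=O(\tau^{-2})$ and then conclude it, whereas one must actually prove the decay of $X^{\perp}$ (or bypass it via dilation invariance of the time-$0$ limit measure of the self-similar flow plus local regularity); the uniform bound $H_{M_\tau}=\tfrac{\tau^2}{2}\langle Y,\nu\rangle$ is not available a priori.

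The second and larger gap is the cylindrical case, which you essentially assert. In the collapsed regime you have no curvature estimate along the end (so the ``tube of sublinear radius'' picture is not justified), no argument that the cross-sections become round circles of the correct radius rather than some other closed curve or a degenerating limit, and no proof that a single asymptotic axis $v(M)$ exists --- you explicitly defer ``ruling out subsequential drift or multi-axis behavior,'' but that statement \emph{is} alternative (2); without it nothing has been proved. Your proposed fix (characterizing $v(M)$ as a minimizer of a translated Gaussian area along the end) is not a known workable argument, and Huisken monotonicity applied to translates does not by itself select an axis. In \cite{Lu} this step is precisely where the heavy machinery enters: once a cylindrical tangent flow appears at some spacetime point at time $0$, the \L ojasiewicz-type uniqueness of cylindrical tangent flows \cite{CM2} and a continuation argument along the ray pin down the axis and upgrade the convergence; nothing in your sketch substitutes for that. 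So while your overall dichotomy (conical blowdown versus collapse to a ray) and the graphical Schauder upgrade in the conical case do track the broad shape of Wang's theorem, the proposal as written does not close either of the two steps where the actual difficulty lies.
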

In particular, theorem \ref{Lu-ends} applies to self shrinkers which arises as the tangent flow to compact mean curvature flows, although it is true one should expect shrinkers with more than one end to appear in a general blowup (for a trivial example consider a neckpinch). We end this discussion with a pseudolocality theorem. Pseudolocality roughly says that far away points are less consequential under the flow than nearby ones no matter their curvature and is a concrete artifact of the nonlinearity of the flow. In our case it is a consequence of the Ecker-Huisken estimates \cite{EH} but we give the formulation of B.L. Chen and L. Yin (see theorem 1.4 in \cite{INS} for a proof in $\R^n$ by controlling Gaussian densities). It will be heavily used in the extension of the flow with surgery below:

\begin{thm}[Theorem 7.5 of \cite{CY}]\label{PL} Let $\overline{M}$ be an $\overline{n}$-dimensional manifold satisfying $\sum\limits_{i=0}^3 | \overline{\nabla}^i \overline{R}m| \leq c_0^2$ and inj$(\overline{M}) \geq i_0 > 0$. Then there is $\epsilon >0$ with the following property. Suppose we have a smooth solution $M_t \subset \overline{M}$ to the MCF properly embedded in $B_{\overline{M}}(x_0, r_0)$ for $t \in [0, T]$ where $r_0 < i_0/2$, $0 < T \leq \epsilon^2 r_0^2$. We assume that at time zero, $x_0 \in M_0$, and the second fundamental form satisfies $|A|(x) \leq r_0^{-1}$ on $M_0 \cap B_{\overline{M}}(x_0, r_0)$ and assume $M_0$ is graphic in the ball $B_{\overline{M}}(x_0, r_0)$. Then we have 
\begin{equation}
|A|(x,t) \leq (\epsilon r_0)^{-1}
\end{equation}
for any $x \in B_{\overline{M}}(x_0, \epsilon r_0) \cap M_t$, $t \in [0,T]$. 
\end{thm}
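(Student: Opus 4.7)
The plan is to prove this pseudolocality statement via Huisken's monotonicity formula combined with Brakke--White $\epsilon$-regularity, following the Gaussian-density approach of Ilmanen--Neves--Schulze cited in the paper; the alternative route through the Ecker--Huisken graphical estimates should also work but is less streamlined in the Riemannian setting.

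First I would reduce to an almost-Euclidean ambient geometry. Since $r_0 < i_0/2$ and $|\overline{\nabla}^i \overline{R}m| \leq c_0^2$ for $i \leq 3$, geodesic normal coordinates centered at $x_0$ give a chart on $B_{\overline{M}}(x_0, r_0)$ in which the ambient metric is $C^3$-close to the Euclidean one, with error bounded by a constant multiple of $c_0^2 r_0^2$. In these coordinates the flow satisfies a perturbed MCF equation and Huisken's monotonicity formula continues to hold up to error terms controlled by $c_0^2 T$; choosing $\epsilon$ small forces those error terms to be negligible, so henceforth one can argue as though $\overline{M} = \R^{\overline{n}}$.

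Second I would show that the initial Gaussian density ratio $\Theta(x_0, 0, r_0)$ is close to $1$. The hypothesis that $M_0$ is graphical with $|A| \leq r_0^{-1}$ in $B_{\overline{M}}(x_0, r_0)$ implies, after rescaling by $r_0$, that inside the unit ball $M_0$ is a small $C^2$-perturbation of the tangent plane $T_{x_0} M_0$. Integrating the backward heat kernel $\Phi_{x_0, r_0^2}$ against $M_0$, the contribution from this graphical piece is close to the planar value $1$, while contributions from outside the ball are exponentially suppressed by $e^{-|y-x_0|^2/(4 r_0^2)}$ and bounded using proper embeddedness and Euclidean volume growth. Hence $\Theta(x_0, 0, r_0) \leq 1 + \delta(\epsilon)$ for any prescribed $\delta$, provided $\epsilon$ is chosen sufficiently small. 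Next, for any $x \in M_t \cap B_{\overline{M}}(x_0, \epsilon r_0)$ with $t \in [0, \epsilon^2 r_0^2]$, a Gaussian-weight comparison transfers the density bound from $(x_0, 0)$ to $(x, 0)$ at a slightly coarser scale, losing only a factor close to $1$, and the (perturbed) monotonicity formula then yields $\Theta(x, t, \rho) \leq 1 + 2\delta$ for $\rho$ comparable to $\epsilon r_0$. Brakke--White $\epsilon$-regularity then forces the flow to be smooth in a small parabolic neighborhood of $(x, t)$ with $|A|(x,t) \leq C/(\epsilon r_0)$; absorbing $C$ into a possibly smaller $\epsilon$ yields the stated bound.

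The main obstacle is the second step: verifying that the initial Gaussian density at $(x_0, r_0^2)$ is genuinely close to $1$ in spite of the fact that the geometry of $M$ outside $B_{\overline{M}}(x_0, r_0)$ is entirely unconstrained by the hypotheses. Proper embeddedness of $M_0$ in the ball prevents uncontrolled concentration just beyond $\partial B$, and exponential decay of the kernel renders truly distant regions harmless, but the accounting in the transition region requires some care and is where the hypothesis $T \leq \epsilon^2 r_0^2$ is essential. A secondary delicate point is ensuring that Huisken's monotonicity in the Riemannian ambient space has controlled error terms; this is handled by the bounded-geometry assumption on $\overline{M}$ together with the shortness of the time interval.
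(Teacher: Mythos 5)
This statement is quoted in the paper from Chen--Yin \cite{CY} (with the paper pointing to \cite{INS} for a Gaussian-density proof in $\R^n$ and to Ecker--Huisken for an alternative route), so there is no proof in the paper to compare against; your chosen strategy is one of the legitimate ones. However, as written your argument has a genuine gap at exactly the point you flag as the ``main obstacle,'' and your proposed resolution of it does not work. You compute the \emph{global} Gaussian density $\Theta(x_0,0,r_0)$ and invoke the \emph{global} Huisken monotonicity formula, but the hypotheses give no information whatsoever about $M_0$ (or $M_t$) outside $B_{\overline{M}}(x_0,r_0)$: the flow is only assumed to be a properly embedded solution \emph{in that ball}, there is no area bound and no volume-growth assumption outside it, and indeed the surface need not even be defined there. ``Proper embeddedness in the ball'' does not prevent arbitrarily large area just beyond $\partial B(x_0,r_0)$, and at the scale $r_0$ the backward heat kernel weight at distance $r_0$ is only $e^{-1/4}$, so the ``transition region'' contribution is genuinely uncontrolled, not merely delicate; the appeal to ``Euclidean volume growth'' uses a hypothesis that is not there. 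The correct implementation of your strategy is the one in \cite{INS} (and in White's local regularity theorem / Ecker's local monotonicity): one localizes the monotonicity formula with a cutoff supported in $B(x_0,r_0)$ and works only at scales $\rho\ll r_0$ and times $T\le\epsilon^2r_0^2$, so that both the cutoff error and the kernel mass near $\partial B(x_0,r_0)$ are negligible and only the controlled part of the surface enters.

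A second, related error is the claim that $\Theta(x_0,0,r_0)\le 1+\delta(\epsilon)$ ``provided $\epsilon$ is chosen sufficiently small'': this quantity does not depend on $\epsilon$ at all, and at the scale $r_0$ a graph with $|A|\le r_0^{-1}$ is \emph{not} an arbitrarily small perturbation of a plane (after rescaling it only satisfies $|A|\le 1$ on the unit ball), so its density ratio at that scale is merely bounded, not within a prescribed $\delta$ of $1$. Closeness of the (localized) density ratios to $1$ holds only at centers near $x_0$ and at scales $\rho$ small compared to $r_0$ (there the curvature bound does make the initial surface almost planar), and it is precisely this restriction that produces the conclusion on the smaller ball $B(x_0,\epsilon r_0)$ for times $T\le \epsilon^2 r_0^2$. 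So the architecture (small localized density ratios at later nearby spacetime points, then Brakke--White regularity) is right, but you need to replace the global density at scale $r_0$ by localized density ratios at scales intermediate between $\sqrt{T}$ and $r_0$, propagated by the cutoff monotonicity formula; the almost-Euclidean reduction in your first step is fine and does control the resulting error terms.
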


\section{The renormalized mean curvature flow}
In this section we discuss the renormalized mean curvature flow (which we'll abreviate RMCF) ultimately to construct, via an adapted surgery flow, an inward level set flow for the RMCF so showing theorem \ref{LSF}. Using the same notation as in the section above for a surface $M \subset \R^3$ the RMCF is given by:

\begin{equation}\label{renorm1}
\frac{dX}{dt} =  \vec{H} + \frac{X}{2}
\end{equation}
Modding out by tangential directions of the flow makes the speed of the flow more transparent and is geometrically equivalent to \ref{renorm1}: 
\begin{equation}\label{renorm}
\frac{dX}{dt} = -(H - \frac{\langle X, \nu \rangle}{2}) \nu
\end{equation}

Where here as before $X$ is the position vector on $M$. It is related to the regular mean curvature flow by the following reparameterization; this will allow us to transfer many deep analytical properties of the MCF to the RMCF. Supposing that $M_t$ is a mean curvature flow on $[-1,T)$, $-1 < T \leq 0$ then the renormalized flow $\hat{M_\tau}$ of $M_t$ defined on $[0, -\log(-T))$ is given in terms of position vector and time by:
\begin{equation}\label{param}
\hat{X}_\tau = e^{\tau/2} X_{-e^{-\tau}},\text{ } \tau = -\log{(-t)}
\end{equation}

One can check this is in agreement with equation \ref{renorm1}. This is a natural flow for us to consider because it is up to a multiplicative term the gradient flow of the Gaussian area and fixed points with respect to it are precisely self shrinkers. More precisely, writing $H_G$ for the mean curvature of a surface with respect to the Gaussian metric one finds from the standard formula for the mean curvature under conformal change that:
\begin{equation}\label{Hrelation}
H_G = e^{\frac{|x|^2}{8} }(H - \frac{\langle X, \nu \rangle}{2})
\end{equation}
$\medskip$

It's clear from this that the RMCF should be better behaved then the MCF in the Gaussian metric then because of the missing exponential factor in the speed of the flow; in fact the surfaces we consider in the sequel will be well behaved with respect to the RMCF but will have unbounded mean curvature in the Gaussian metric. Important for below, since $t = -e^{-\tau}$, $H = e^{\tau/2} \hat{H}$, $X= e^{-\tau/2}\hat{X}$ and the unit normal is unchanged we have: 
\begin{equation}\label{change}
-tH - \frac{\langle X, \nu \rangle}{2} \to e^{-\tau/2}(\hat{H} - \frac{\langle \hat{X}, \nu \rangle}{2})
\end{equation}
Note that throughout when we refer only to the RMCF we will use the notation typical to the MCF (i.e. $t$ instead of $\tau$, etc.). We will make the distinction clear when we refer to the ``regular'' MCF. 
$\medskip$

 Our main object of study in this section will be the following set, which we bold for emphasis; the asymptotics assumed are inspired by Bernstein and Wang \cite{BW} for use with Ecker and Huisken's noncompact maximum principle in \cite{EH} as we'll use shortly:

\begin{defn} Denote by $\Sigma$ the set of asymptotically conical hypersurfaces in $\R^3$ for which $H - \frac{\langle X, \nu \rangle}{2} \geq c(1 + |X^2|)^{-\alpha}$ for some constants $c, \alpha > 0$ and choice of outward normal. 
\end{defn}

Throughout, we'll say that $M$ is $\textit{shrinker mean convex}$ if $H - \frac{\langle X, \nu \rangle}{2} \geq 0$ at all points on $M$. First we note that short time existence of the RMCF of these surfaces:

\begin{lem}  If $M \in \Sigma$ then there exists some $\epsilon > 0$ for which the classical RMCF $M_t$ of $M$ exists for $t \in [0, \epsilon)$. 
\end{lem}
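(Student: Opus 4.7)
The plan is to reduce short-time existence of the RMCF to short-time existence of the classical MCF, using the reparameterization \eqref{param}. Specifically, set $M$ as initial data at time $t_0 = -1$ for the unnormalized MCF and run it forward: if this produces a smooth MCF $M_t$ on $[-1, -1+\delta)$ for some $\delta > 0$, then defining $\hat{M}_\tau = e^{\tau/2} M_{-e^{-\tau}}$ yields a smooth RMCF of $M$ on the interval $[0, -\log(1-\delta))$, since the reparameterization is a smooth diffeomorphism. In particular the shrinker mean convexity assumption $H - X^\perp/2 \geq c(1+|X|^2)^{-\alpha}$ plays no role in this lemma; only the asymptotic conicality of $M$ matters.

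To run the classical MCF, I would first verify that $M$ has globally bounded second fundamental form. Since $M$ is asymptotically conical, there is $R_0 > 0$ so that outside $B(0,R_0)$ the rescalings $\rho M$ converge in $C^2_{\mathrm{loc}}$ to a smooth regular cone $C_\gamma$. A regular cone satisfies $|A_{C_\gamma}|(r\omega) = O(1/r)$, so transferring this estimate back via the $C^2_{\mathrm{loc}}$ convergence gives $|A_M|(x) \to 0$ as $|x| \to \infty$. Combined with smoothness of $M$ and compactness of $M \cap B(0, R_0)$, we conclude $\sup_M |A| < \infty$.

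With this uniform curvature bound in hand, short-time existence of the MCF with initial data $M$ follows from standard results for properly embedded hypersurfaces of bounded curvature in Euclidean space (e.g., the Ecker--Huisken local existence theory \cite{EH}, or alternatively a covering argument combined with the pseudolocality result Theorem \ref{PL} applied on small parabolic balls where $M$ can be written as a graph with controlled gradient). Either route produces a smooth solution on some interval $[-1, -1+\delta)$. Reparameterizing back then yields the desired RMCF on $[0, \epsilon)$ with $\epsilon = -\log(1-\delta) > 0$.

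The only real subtlety is justifying that the asymptotically conical initial data falls within the hypotheses of the chosen short-time existence theorem; once the curvature bound is established this is routine, because on each ball of fixed radius far out, $M$ is a small $C^2$ graph over the corresponding piece of $C_\gamma$, and the MCF can be constructed locally there and patched. Preservation of finer asymptotics along the flow (which will matter later) is not required here.
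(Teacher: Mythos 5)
Your proposal is correct and follows essentially the same route as the paper: the paper's proof is exactly to flow $M$ by the regular MCF for a short time via Ecker--Huisken \cite{EH} and then apply the reparameterization \ref{param} to obtain the RMCF. The additional details you supply (the curvature decay from asymptotic conicality and the local graph/patching justification) simply flesh out what the paper leaves implicit.
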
 
\begin{pf} 
We can flow an element in $\Sigma$ by the regular MCF for a short time by Ecker-Huisken \cite{EH}; then apply the reparameterization \ref{param} to get a solution for short times for the RMCF. 
\end{pf}

Our next lemma is that shrinker mean convexity is preserved under the RMCF; in our future application to the flow with surgery, note that this must be reapplied (resetting $t = 0$, adjusting the constant $c$ appropriately) after every surgery time since high curvature regions will be removed: 
\begin{lem}\label{con} Let $M_t$ be a smooth flow under RMCF on $[0,T]$. Then if it is initially in $\Sigma$ it remains so under the MCF and in fact: 
\begin{equation}\label{lb}
(H - \frac{\langle X, \nu \rangle}{2}) > ce^{t/2}(1 + e^{-t}|X|^2 + 2n(-e^{-t}+1))^{-\alpha} > 0
\end{equation}
\end{lem}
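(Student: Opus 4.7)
The plan is to reparameterize to the ordinary MCF, apply a noncompact parabolic maximum principle to the shrinker quantity on the MCF side, and then translate the resulting lower bound back to the RMCF via \eqref{change}. Concretely, I pass from the RMCF on $[0,T]$ to the MCF on $[-1,-e^{-T})$ via \eqref{param}. A direct computation (using $(\partial_t - \Delta - |A|^2)H = 0$ together with an analogous identity for $X^\perp$) shows that the shrinker quantity on the MCF, which corresponds via \eqref{change} to a positive multiple of $\phi = H - X^\perp/2$, satisfies the linear parabolic equation $(\partial_t - \Delta - |A|^2)w = 0$ on each $M_t$. At initial MCF time $t=-1$ this quantity equals $\phi_0$, which by the hypothesis $M \in \Sigma$ is bounded below by $c(1+|X|^2)^{-\alpha}$.

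Next, I construct an explicit barrier. Under MCF one has $(\partial_t - \Delta)|X|^2 = -2n$ on each $M_t$, so $|X|^2 + 2nt$ is a solution of the heat equation along the flow. Consequently, for $\alpha > 0$ the function
\[
\Psi(X,t) := c\bigl(1 + |X|^2 + 2n(t+1)\bigr)^{-\alpha}
\]
satisfies $(\partial_t - \Delta)\Psi \le 0$ by the chain rule, and hence $(\partial_t - \Delta - |A|^2)\Psi \le 0$ as well since $\Psi > 0$. Moreover $\Psi|_{t=-1} = c(1+|X|^2)^{-\alpha}$ matches the initial lower bound on $w$.

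Third, I apply a noncompact parabolic maximum principle of Ecker-Huisken type \cite{EH} to conclude $w \ge \Psi$ on the MCF for all times in the flow interval. Translating back via \eqref{param} using $|X|^2 = e^{-\tau}|\hat X|^2$ and $t+1 = 1 - e^{-\tau}$, together with the factor from \eqref{change}, yields the explicit bound \eqref{lb}; in particular $\phi$ remains strictly positive, so shrinker mean convexity is preserved.

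The main obstacle is the noncompact maximum principle itself: on an evolving noncompact surface the classical interior maximum principle is not automatic, and one must rule out the appearance of a negative minimum of $w-\Psi$ at infinity. This is precisely where the asymptotically conical hypothesis built into $\Sigma$ is crucial, providing both the Euclidean volume growth and the polynomial decay of $w-\Psi$ needed to apply the Ecker-Huisken comparison framework.
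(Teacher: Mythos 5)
Your argument is correct and is essentially the paper's proof: the paper disposes of the lemma by citing the relation \eqref{change} together with the Ecker--Huisken maximum principle as used in Lemma 3.2 of \cite{BW}, and what you spell out (the evolution equation $(\partial_t-\Delta-|A|^2)w=0$ for the shrinker quantity under MCF, the barrier built from the heat solution $|X|^2+2nt$, and the translation back through \eqref{param}) is precisely the content of that citation. No genuinely different route is taken, so there is nothing further to compare.
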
 
\begin{pf}

This follows from the relation \ref{change} above along with the Ecker-Huisken maximum principle as utilized by Bernstein and L. Wang; see lemma 3.2 of \cite{BW}.
\end{pf}

\subsection{The renormalized mean curvature flow with (localized) surgery}
$\medskip$

Our goal is to construct an inward level set flow $L_t$ corresponding to $M \in \Sigma$ by the RMCF. To do that we will start by constructing a mean curvature flow with surgery out of $M$. In this section for the sake of brevity we assume familiarity with Haslhofer and Kleiner's mean curvture flow with surgery \cite{HK} in Euclidean space as well as its adaption to the general ambient setting by Haslhofer and Ketover \cite{HKet}, but below we will give summaries when possible to orient the reader. 
$\medskip$

Giving a very brief account of the surgery flow, recall that in the mean curvature flow with surgery one finds for a mean convex surface $M$ (in higher dimensions, 2-convex) curvature scales $H_{th} < H_{neck} < H_{trig}$ so that when $H = H_{trig}$ at some point $p$ and time $t$, the flow is stoped and suitable points where $H \sim H_{neck}$ are found to do surgery where ``necks,'' high curvature regions where the surface will be approximately cylindrical, are cut and caps are glued in. The high curvature regions are then topologically identified and discarded and the low curvature regions will have mean curvature bounded on the order of $H_{th}$. The flow is then restarted and the process repeated. 
$\medskip$

There are a couple different approaches on the construction of the mean curvature flow with surgery; aside from Haslhofer and Kleiner's approach see the work \cite{HS} of Huisken and Sinestrari for the original paper on MCF with surgery for $n \geq 3$ and the paper of Brendle and Huisken \cite{BH} for its extension to $n =2$. Here we will follow Haslhofer and Kleiner as all their results are stated locally. There the curvature thresholds are determined by the parameters $\Balpha = (\alpha, \beta, \gamma)$, fixed real numbers. Here $\alpha$ is a noncollapsing constant: we say a surface is $\alpha$ noncollapsed if there are inner and outer osculating balls of radius (at least) $\alpha/H$. Andrews, Sheng and Wang, and White \cite{BA,SW,W} independently showed this is preserved under the MCF (or at least that its implied by their work). $\beta$ is a $2$-convexity assumption which for our case is set to 1 (since we are only involved with surfaces in $\R^3$), and $\gamma$ is an initial bound on mean curvature. 
 $\medskip$
 
 For our purposes, we will replace the role of $H$ with $F = H - \frac{\langle X, \nu \rangle}{2}$ and say surfaces which are noncollapsed with respect to $F$ are $F$ $\alpha$-noncollapsed; recall from above that convexity of $F$ with respect to the renormalized flow is preserved. For motivation we next discuss $F$-noncollapsing under the RMCF in just the compact case:
 \begin{lem}\label{noncollapsing} Suppose $M$ is a compact manifold which is $F$ $\alpha$-noncollapsed and consider $M_t$, the flow of $M$ under the renormalized mean curvature flow. Then there is a continuous function $C(t) > 0$ depending on $\alpha$ only with $C(0) = \alpha$ for which $M_t$ will be $F$ $\alpha$-noncollapsed with with constant $C(t)$. 
 \end{lem}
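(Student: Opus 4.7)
The plan is to adapt Andrews's proof of preservation of $\alpha$-noncollapsedness under MCF to the renormalized setting, with the speed $F = H - X^\perp/2$ playing the role of $H$. Since $M$ is compact and $F > 0$ is preserved by lemma \ref{con}, the global maximum principle on $M_t \times M_t \setminus \text{diag}$ is available throughout the smooth flow.

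First, I would reformulate $F$-$c$-noncollapsedness at time $t$ as the pointwise estimate
\[
Z(p,q,t) := \frac{2 \langle p-q, \nu(p) \rangle}{|p-q|^2} \leq \frac{F(p,t)}{c}
\]
for all $p \neq q$ on $M_t$ (with the analogous estimate for $-\nu$ handling the outer balls). Next, I would derive the evolution equations of $Z$ and $F$ under the RMCF. For $F$, the standard gradient-flow computation gives
\[
\left( \partial_t - \Delta + \tfrac{1}{2} \langle X, \nabla \cdot \rangle \right) F = \left( |A|^2 + \tfrac{1}{2} \right) F,
\]
which reflects that the $+\tfrac{1}{2}$ zeroth-order term in the Gaussian Jacobi operator is what makes shrinkers strictly unstable. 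The evolution of $Z$ parallels Andrews's MCF computation, with additional lower-order contributions coming from the $-X^\perp/2$ part of the normal velocity and its unequal evaluation at $p$ versus $q$.

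The heart of the argument is the Andrews-type maximum principle computation applied to the ratio $K(t) = \max_{p \neq q} Z(p,q,t) / F(p,t)$. As in Haslhofer--Kleiner \cite{HK}, at a spatial maximum of $Z/F$ (treated with the standard care as $p \to q$) the second-order terms combine with favorable sign, and in the pure MCF case one obtains $K'(t) \leq 0$ exactly. In our setting the ambient drift $X/2$ produces extra terms; since $M_t$ remains in a bounded region of $\mathbb{R}^3$ (being compact and shrinker mean convex, so contracting in Gaussian area), these terms are controlled in terms of $K$ itself and the smooth geometry of $M_t$, yielding a differential inequality of the form $K'(t) \leq c_0(t) K(t) + c_1(t)$ where $c_0, c_1$ depend only on $\alpha$ and the flow. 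Integrating yields an explicit $C(t)$ with $C(0) = \alpha$ such that $K(t) \leq C(t)^{-1}$, which is the claimed $F$-$C(t)$-noncollapsedness.

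The main obstacle is that $F$ is not scale invariant: the $X^\perp/2$ piece breaks the homogeneity that makes Andrews's argument give exact preservation of the noncollapsing constant. Tracking the resulting error terms in the evolution of $Z/F$ and showing they are controlled by the compact geometry is the technical core of the proof. This lack of scale invariance is precisely why the conclusion is phrased with a $t$-dependent $C(t)$ rather than a flat $\alpha$, and it is also why extending this lemma to the noncompact setting needed for the surgery construction in the sequel requires genuinely new ingredients, such as pseudolocality (theorem \ref{PL}) together with the asymptotically conical structure of the initial data.
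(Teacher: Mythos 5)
Your route---redoing Andrews's two-point maximum principle directly for the RMCF, with speed and comparison function both equal to $F=H-\tfrac{X^\perp}{2}$---is genuinely different from the paper's, which performs no parabolic computation at all. The paper transfers to the ordinary MCF via the reparameterization \ref{param}: by \ref{change}, $F$-noncollapsedness of the rescaled flow corresponds to noncollapsedness with respect to the Smoczyk solution $f=-tH-\tfrac{X^\perp}{2}$ (up to sign conventions) of $\partial_t f=\Delta f+|A|^2f$ along the MCF \cite{Smo}, and remark (7) of Andrews \cite{BA} gives \emph{exact} preservation of noncollapsing for any positive solution of that linearized equation; the $t$-dependence of $C(t)$ then comes solely from the explicit distortion of the dilation when transferring back (balls map to balls, radii and $F$ rescale by powers of $e^{t/2}$), so $C(t)$ depends on $\alpha$ and $t$ alone. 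What the transfer buys is precisely that the ``extra terms'' you propose to estimate never appear: the difference between RMCF and MCF is an exact ambient dilation, which maps inner/outer osculating balls to osculating balls and rescales the two-point quantity $Z$ homogeneously, with no error depending on the extent of $M_t$.

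Two concrete soft spots in your write-up. First, the technical core---that at a maximum of $Z/F$ the second-order terms still combine with a favorable sign once the drift $-\tfrac12\langle X^T,\nabla\cdot\rangle$, the $+\tfrac12 F$ zeroth-order term, and the unequal evaluation of $X^\perp$ at $p$ and $q$ are accounted for---is asserted (``parallels Andrews'') rather than carried out, and this is exactly the step that distinguishes your route from quoting \cite{BA} or \cite{HK}. Second, your proposed control of these terms via ``the smooth geometry of $M_t$'' yields $c_0(t),c_1(t)$ depending on the particular flow (e.g.\ on $\sup_{M_t}|X|$), hence a $C(t)$ depending on $M$ and not on $\alpha$ only, which is strictly weaker than the statement; moreover the justification offered (``compact and shrinker mean convex, so contracting in Gaussian area'') does not bound the spatial extent---Gaussian area decreases along any RMCF, and shrinker mean convex surfaces with the other choice of normal expand, as with the paper's outwardly perturbed sphere; on a finite time interval boundedness holds simply by smoothness and compactness, but with a flow-dependent bound. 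If you track the dilation contributions exactly instead of estimating them, you should find they carry a definite sign (the dilation scales $Z$ down at rate $\tfrac12$ while $F$ gains $+\tfrac12F$), so the ratio argument closes with no dependence on the geometry of $M_t$ and even improves the constant---recovering what the paper gets for free by transferring to the MCF.
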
 
 \begin{pf}
 In remark (7) of \cite{BA} Andrews notes that noncollapsing is preserved under the (regular) mean curvature flow for positive functions $f$ satisfying $\frac{df}{dt} = \Delta f + |A|^2f$; see also \cite{ALM} for more general homogeneous flows and \cite{Lin} for general Haslhofer-Kleiner type curvature estimates. Noting that $f = -tH - \frac{\langle X, \nu \rangle}{2}$ is such a function in our setting due to Smoczyk \cite{Smo}, noncollapsing with respect to $f$ is preserved under the MCF on $[-1, T)$. Using that for any interval $[-1, T)$, $T < 0$, that the distortion in the continuous reparameteriztion \ref{param} is bounded, we see that balls will not be mapped to points. In particular within these regions then we can find osculating balls with diameter bounded below in terms of the original ones depending on $t$,  giving us the statement because $f$ is brought to $F$ under the coordinate change up to a positive, continuous factor which is also bounded on $[-1, T)$.  \end{pf}

We will localize the mean curvature flow with surgery much as in the spirit of the author's previous work \cite{Mra}; we first remark that a version of the pseudolocality theorem holds for the RMCF via the reparameteriztion \ref{param}:
\begin{lem}\label{plends} Let $M \in \Sigma$ and consider its RMCF $M_t$. For any $\epsilon, T > 0$ finite there exists $R_1$ such that for any ball $B(p,r) \subset B(0, R_1)^c$, $|A| < \epsilon$ on $M_t \cap (B(p,r) \times [0, T])$. 
\end{lem}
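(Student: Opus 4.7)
The plan is to reduce to the Chen--Yin pseudolocality estimate (Theorem \ref{PL}) for the ordinary MCF via the reparameterization \ref{param}, exploiting the decay $|A|_M \to 0$ at infinity provided by the asymptotically conical hypothesis on $M$.

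First, via \ref{param}, the RMCF $M_\tau$ for $\tau\in[0,T]$ corresponds to the ordinary MCF of $M$ on the interval $[-1,-e^{-T}]$, of duration $1-e^{-T}<1$. Under the rescaling $\hat X_\tau=e^{\tau/2}X_{-e^{-\tau}}$, second fundamental forms transform as $|A|_{\hat M_\tau}(\hat x) = e^{-\tau/2}|A|_{M_t}(e^{-\tau/2}\hat x)$; in particular $e^{-\tau/2}\le 1$ for $\tau\ge 0$, so any bound on $|A|$ on the MCF side yields an at-least-as-good bound on the RMCF side, while MCF points at distance $\ge R$ from the origin correspond to RMCF points at distance $\ge e^{\tau/2}R\ge R$.

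Next, since $M$ is asymptotically conical, for any fixed $r_0$ I can choose $R_0$ large enough that for every $p\in M$ with $|p|>R_0$ the ball $B(p,r_0)$ satisfies: (a) $|A|_M\le r_0^{-1}$ on $B(p,r_0)\cap M$, and (b) $M\cap B(p,r_0)$ is $C^2$-graphical over the tangent plane to the asymptotic cone near $p$. In particular, such balls satisfy the initial hypotheses of Theorem \ref{PL}. Letting $\epsilon_0$ denote the constant supplied by pseudolocality, I fix $r_0>1/\epsilon_0$ large enough that both $\epsilon_0^2r_0^2>1>1-e^{-T}$ (so the pseudolocality time window covers the whole MCF interval) and $(\epsilon_0r_0)^{-1}<\epsilon$. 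Proper embeddedness of $M_t$ inside $B(p,r_0)$ for $t\in[-1,-e^{-T}]$—the remaining hypothesis of Theorem \ref{PL}—follows by a short bootstrap: it holds initially, bounded $|A|$ implies bounded normal speed $|H|$, and therefore the displacement over bounded time is small enough, for $|p|$ large compared to $r_0$, to keep the surface inside $B(p,r_0)$. Applying Theorem \ref{PL} at each such $p$ then gives $|A|_{M_t}\le(\epsilon_0r_0)^{-1}<\epsilon$ on $B(p,\epsilon_0 r_0)\cap M_t$ throughout the MCF interval.

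Setting $R_1:=e^{T/2}(R_0+r_0)$ and translating back via \ref{param} yields $|A|<\epsilon$ on $M_\tau\cap B(0,R_1)^c$ for every $\tau\in[0,T]$, which is the claim. The main bookkeeping point is aligning the pseudolocality time window $\epsilon_0^2r_0^2$ with the MCF duration $1-e^{-T}<1$, which is handled uniformly in $T$ by fixing $r_0>1/\epsilon_0$ at the outset; the asymptotically conical hypothesis then guarantees, for $|p|$ past some $R_0$, both the initial curvature bound and graphicality hypotheses of Theorem \ref{PL}, and no ideas beyond pseudolocality and the scaling \ref{param} are needed.
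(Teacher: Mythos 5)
Your proposal is correct and follows exactly the route the paper intends: Lemma \ref{plends} is stated there as a direct consequence of the reparameterization \ref{param} together with the Chen--Yin pseudolocality theorem (Theorem \ref{PL}), with the conical asymptotics supplying the initial curvature bound and graphicality, and the favorable scaling $e^{-\tau/2}\le 1$ plus the choice $R_1=e^{T/2}(R_0+r_0)$ handling the transfer back to the RMCF. Your write-up simply fills in the bookkeeping the paper leaves implicit, so there is nothing substantive to correct.
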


Below we will refer to an application of lemma \ref{plends} by a slight abuse of terminology as pseudolocality. With this in hand we now discuss how to define a mean curvature flow with surgery on elements in $\Sigma$: 
\begin{thm}\label{surgery} For any $M \in \Sigma$, there is a flow with surgery $S_t$ starting from $M$, defined on $[0, \infty)$, which agrees with the renormalized mean curvature flow except for a discrete set of times $t_i$ at which necks are cut and replaced by caps. The surgery parameters depend on time, but on each interval $[k, k+1]$ there is a uniform choice of parameters $F_{th,k}$, $F_{neck,k}$, $F_{trig,k}$ which suffice. Additionally one may take $F_{th,k}$ arbitrarily large if they choose. 
\end{thm}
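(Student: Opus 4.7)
My plan is to adapt Haslhofer--Kleiner's mean curvature flow with surgery \cite{HK} to the renormalized setting, replacing $H$ throughout by the controlling quantity $F = H - \frac{X^\perp}{2}$, and using pseudolocality (Lemma \ref{plends}) to reduce the noncompact construction to a sequence of compactly supported surgeries.

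I would construct the flow inductively on unit time intervals $[k,k+1]$. Assume at time $t=k$ the surface $M_k \in \Sigma$ is $F$-noncollapsed with some constant $\alpha_k$. On $[k,k+1]$, Lemma \ref{con} gives a uniform positive lower bound on $F$ on any fixed compact set independent of $t$, and the argument of Lemma \ref{noncollapsing} (which is local in nature and extends to the noncompact setting via pseudolocality) provides a uniform $F$-noncollapsing constant. These are the precise analogues of the inputs to the Haslhofer--Kleiner scheme and fix surgery parameters $H_{th,k} < H_{neck,k} < H_{trig,k}$ on the interval, with the freedom to take $H_{th,k}$ arbitrarily large by enlarging the other two proportionally. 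I would then flow by the RMCF on $[k,k+1]$ until either time $k+1$ is reached (in which case we iterate on $[k+1,k+2]$), or $F$ first touches $H_{trig,k}$ at some point. Pseudolocality confines this point to a fixed compact ball $B(0,R_k)$: outside this ball $|A|$ is arbitrarily small, and combined with boundedness of $X^\perp$ along asymptotically conical ends (preserved along the flow) this keeps $F = H - X^\perp/2$ well below $H_{trig,k}$ far from the origin. Inside $B(0,R_k)$ one performs the standard Haslhofer--Kleiner surgery: identify $\delta$-necks at scale $H_{neck,k}$, cut and insert standard caps, and discard topologically trivial high-curvature pieces. The asymptotic end is untouched, so the resulting surface remains in $\Sigma$ with updated constants in the asymptotic decay.

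The primary obstacle is ensuring the cap insertion preserves shrinker mean convexity $F \geq 0$ (rather than just $H > 0$ as in the standard construction) together with $F$-noncollapsing up to a controlled loss. Since all surgeries occur inside $B(0,R_k)$, the extra term $X^\perp/2$ is uniformly bounded by $R_k/2$ on the inserted caps, so taking $H_{neck,k}$ large enough that the Haslhofer--Kleiner cap satisfies $H > R_k$ pointwise forces $F > 0$ there as well; away from caps, the quantitative lower bound on $F$ from Lemma \ref{con} is untouched. Finiteness of surgeries on each interval follows from Huisken's monotonicity of the Gaussian area under the RMCF: each surgery at scale $H_{neck,k}$ taking place in the bounded region $B(0,R_k)$ removes a definite amount of Gaussian area, and the total Gaussian area at time $k$ is finite since $M_k$ has polynomial volume growth by Theorem \ref{proper}. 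Iterating over $k$ then gives the surgery flow on $[0,\infty)$ with the claimed piecewise-uniform parameters.
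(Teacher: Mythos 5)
Your overall strategy---running Haslhofer--Kleiner with $H$ replaced by $F = H - \frac{X^\perp}{2}$, confining surgeries to a compact ball via pseudolocality, and fixing parameters interval by interval---is the same as the paper's, but there is a genuine gap at the one point where the paper's proof does real work: producing a noncollapsing constant in the region where surgeries actually occur. You assume at each time $t=k$ that $M_k$ is globally $F$-noncollapsed with some constant $\alpha_k$, and you assert that the argument of Lemma \ref{noncollapsing} ``is local in nature and extends to the noncompact setting via pseudolocality.'' Neither claim holds as stated. Since $F \to 0$ along the asymptotically conical end, there may be no $\alpha$ for which an element of $\Sigma$ is globally $F$ $\alpha$-noncollapsed (the paper flags exactly this), so your induction hypothesis can already fail at $t=0$. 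More importantly, Andrews-type noncollapsing is not a local estimate: it is proved by a maximum principle applied to the two-point function $Z(x,y,t)$ ranging over all pairs of points of the evolving surface, and pseudolocality (Lemma \ref{plends}) only supplies curvature bounds far from the origin; a curvature bound is not a noncollapsing bound, and it does not control the behavior of $Z$ at spatial infinity. As written, your argument therefore yields no $F$-noncollapsing constant inside $B(0,R_k)$, which is precisely where $F$ reaches $H_{trig,k}$; without it there are no canonical neighborhoods, no detection of necks at scale $H_{neck,k}$, and the Haslhofer--Kleiner surgery procedure cannot be invoked there.

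The paper closes exactly this gap: it introduces a notion of $F$-noncollapsing localized to a ball, observes that in a large annulus $B(0,2R)\setminus B(0,R)$ the lower bound of Lemma \ref{con} together with pseudolocality give $F > c$ and controlled geometry, hence noncollapsing there, and then (back-parameterizing to the ordinary MCF) applies the localized two-point maximum principle for Andrews' function $Z$ (Proposition 3.2 of \cite{Mra}) to propagate the annulus constant into the inner ball; only with that in hand can the surgery construction of \cite{HKet} (or section 4 of \cite{MW2}), which also supplies the $F$-noncollapsed analogues of the curvature estimates and canonical neighborhood theorem you take for granted, be run inside $B(0,R)$, with the constant re-established after each surgery. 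Your remaining observations---that caps inserted in $B(0,R_k)$ satisfy $F>0$ once the cap scale dominates $R_k$, and that surgeries on $[k,k+1]$ are finite in number via a definite Gaussian area drop---are consistent with the paper's construction, but they do not substitute for the missing noncollapsing propagation, which is the crux of Theorem \ref{surgery}.
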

\begin{pf}

Reiterating the above, for mean convex surfaces in $\R^3$ the curvature scales $H_{th}, H_{neck}, H_{trig}$ depend on an $\alpha$ noncollapsing constant and initial bound on $H$; in the notation for surgery parameters above we replaced $H$ with $F$ indicating our use of the quantity $F = H - \frac{\langle X, \nu \rangle}{2}$. As we discussed above in the compact case $\alpha$ noncollapsing with respect to $F$ is preserved with some possible deterioration in the constant for compact noncollapsed surfaces (at least using our simple reasoning); we face the added difficulty of noncompactness though and, since $F \to 0$ at the ends, there may be no choice of $\alpha$ for which our $M \in \Sigma$ is $\alpha$-noncollapsed as well. 
$\medskip$

We will deal with this issue of noncollapsing by localizing it where it is needed. We will say a surface $M$ is $\alpha$-noncollapsed in a ball $B$ if for any $x, y \in M \cap B$ the $x$ (resp $y$) is not in either the inner or outer osculating ball at $y$ of radius $\alpha/F(y)$ (resp $x$). Let $M \in \Sigma$, fix $T > 0$ and suppose its smooth flow exists on $[0, \epsilon)$. By pseudolocality, one may choose $B(0,R)$ large enough so that any singularity which may occur lays within $B(0,R)$ (iterating the following, up to time $T$). Recalling from lemma \ref{con} above the decay rate of $F$ is bounded below on the ends for finite times so that in a sufficient large annulus $A= B(0,2R) \setminus B(0,R)$ $F > c$ on $[0, \epsilon)$ and hence the surface is $F$ $\alpha$-noncollapsed for some $\alpha_0$ in the annulus $A$. Switching to the corresponding regular MCF and denoting momentarily $\widetilde{F} = -tH - \frac{\langle X, \nu \rangle}{2}$, $\widetilde{\epsilon}$ for $t^{-1}(\epsilon)$, and similarly defining $\widetilde{M}_t$, $\widetilde{A}$ we get that $\widetilde{M}_t$ is $\widetilde{F}$ noncollapsed in $\widetilde{A}$ on $[-1, \widetilde{\epsilon})$ for some $\widetilde{\alpha_0}$. By applying the maximum principle to the function $Z(x,y,t)$ from Andrew's proof \cite{BA} of noncollapsing (see proposition 3.2 in the author's previous article \cite{Mra}), the noncollapsing constant extends into the inner ball bounded by the annulus. Switching back to the RMCF $M_t$ gives that the noncollapsing extends into $B(0,R)$ for some constant $\alpha_1$ arguing as in lemma \ref{noncollapsing}. 
$\medskip$

This allows us to employ the mean curvature flow with surgery within $B(0,R)$ exactly as in section 7 of the paper of Haslhofer and Ketover \cite{HKet}, where the mean curvature flow with surgery is developed for curved ambient spaces. More or less along these same lines, section 4 of the author's previous joint work with S. Wang \cite{MW2} also applies by reversing the parameterization and working in the MCF as in the above paragraph. The point is that one can show curvature estimates analogous to the standard case in $\R^{n+1}$ for $F$-noncollapsed flows and blowup limits will be noncollapsed mean convex MCFs. This leads to global convergence and convexity theorems which lead to a canonical neighborhood theorem. 
$\medskip$

Noting that the surgery can be arranged so that if a surgery is done at a time $T_s < T$ the noncollapsing constant obtained still holds,  in particular from the above we get that on each interval $I_k = [0, k]$ a surgery flow with constants $F_{th,k} < F_{neck,k} < F_{trig,k}$ as described above may be performed, although as $k \to \infty$ the surgery parameters would be expected to degenerate so a uniform choice is not guaranteed to hold. To deal with this small issue, note that the same argument gives on $[k, k+1]$ uniform noncollapsing within the regions high curvature may develop, so we can simply readjust the surgery parameters on each interval $[k, k+1]$ to get a surgery flow with discrete surgery times. We will denote it by $S^k_t$ for the interval $[k, k+1]$, when we are in situations where it is useful to refer to the (changing in time) surgery parameters and $S_t$ when we are not. The final comment in the statement above is due to the fact that actually the ratios of the surgery parameters just have to be bounded below by a certain amount. \end{pf}

\subsection{The level set flow of elements of $\Sigma$.} 
$\medskip$

The point of this section is to prove theorem \ref{converge} below, that the surgery flows above converge to the level set flow of $M$ and it is nonfattening. An immediate corollary of the first part will be items (1) and (2) of theorem \ref{LSF}, while that the flow is nonfattening will be used in the sequel. Recalling the definition of (set-theoretic) weak and level set flows by Ilmanen \cite{I1} for the regular mean curvature flow, a weak set flow is a family which satisfies the avoidance principle:

\begin{defn}[Weak Set Flow]\label{original_defn} Let $W$ be an open subset of a Riemannian manifold and consider $K \subset W$. Let $\{\ell_t\}_{t \geq 0}$ be a one -parameter family of closed sets with initial condition $\ell_0 = K$ such that the space-time track $\cup \ell_t \subset W$ is relatively closed in $W$. Then $\{\ell_t\}_{t \geq 0}$ is a weak set flow for $K$ if for every smooth closed surface $\Sigma \subset W$ disjoint from $K$ with smooth MCF defined on $[a,b]$ we have 
\begin{equation}
\ell_a \cap \Sigma_a = \emptyset \implies \ell_t \cap \Sigma_t = \emptyset
\end{equation} 
for each $t \in [a,b]$
\end{defn} 
Note this defintion can be applied to very general intial data, including domains as well as closed smooth hypersurfaces. The set theoretic level set flow is the largest weak level set flow: 

\begin{defn}[Level set flow] The level set flow of a set $K \subset W$, which we denote $K_t$, is the maximal weak set flow. That is, a one-parameter family of closed sets $K_t$ with $K_0 = K$ such that if a weak set flow $\ell_t$ satisfies $\ell_0 = K$ then $\ell_t \subset K_t$ for each $t \geq 0$. 
The existence of a maximal weak set flow is verified by taking the closure of the union of all weak set flows with a given initial data. 
\end{defn}

We say the level set flow of a hypersurface fattens when it develops an interior -- this is closely related to uniqueness questions for flows emanating from a given intial datum. We warn the reader that the level set flow of noncompact sets can be more complicated in comparison to the compact case, see section 7 of \cite{I3} for some pathological examples. Thus one must proceed with caution, even in the mean convex case where it is known to be nonfattening when the initial data is compact. 
$\medskip$

 Since the RMCF is a reparameterization of the MCF the avoidance principle still holds; hence one can use the same definitions with respect to the RMCF (actually a so-called super avoidance principle holds, as discussed in \cite{OW}); in fact the level set flow with respect to RMCF can be gotten from the one for the MCF via the reparameterization. We remark there is also the related notion of level set flow due to Evans-Spruck and Chen-Giga-Goto \cite{ES}, \cite{CGG} where they define it via the level sets of viscosity solutions to 
\begin{equation}\label{visc}
w_t = |\nabla w| Div\left(\frac{\nabla w}{|\nabla w|} \right)
\end{equation}

In \cite{I1}, section 10, Ilmanen shows in the compact case these notions are equivalent and this should follow in our particular noncomapct case by the work below but its unnecessary to do so for our goals. Denote by $S^{k,i}_t$ a sequence of surgery flows with curvature thresholds $\{(F_{th,k})_i\} \to \infty$ as $i \to \infty$ for each fixed $k$ with initial data $M \in \Sigma$. When the index $k$ isn't important (and it often isn't for our needs) we will just index above by $i$ and write $S^i_t$; in case a given time $t$ is a surgery time, we mean the surgery flow before surgery is done. From the construction above we see that on any finite interval $[0, T]$ we may suppose these flows are $F$ $\alpha$-noncollapsed for some uniform $\alpha$ in a uniform bounded set outside of which they have uniformly bounded curvature. The work of Lauer \cite{Lau} and Head \cite{Head} in the compact case suggest then the following:

\begin{thm}\label{converge} The Hausdorff limit of $S^{i}_t$ as $i \to \infty$, which we denote to be $M_t$, agrees with the level set flow $L_t$ of $M$ and the level set flow of $L_t$ is nonfattening. 
\end{thm}

Note that the limit of spacetime tracks above exists at least after taking a subsequence by the general fact that the space of relatively compact sets of a complete metric space in a bounded domain is compact under Hausdorff convergence and that the shrinker mean curvature on the end tends to zero uniformly; because the convergence is monotone in terms of bounded sets taking a subsequence can then be seen to be unnnecessary. To show this statement we proceed in the same spirit as in section 4 of the author's previous paper \cite{Mra} which in turn was inspired by the arguments in \cite{HW} and \cite{Lau}, with some simplifications here tailored to the specific setting. Roughly speaking, the idea is to use the approximating surgery flows slightly offset in time as barriers to the level set flow, and use that $M_t$ is defined as a limit of these to argue. 
$\medskip$

Of course, an issue with using the approximating surgery flows as barriers though is that they are noncompact. The following lemma will be used below to show that one may approximate the surgery flows with compact ones sufficiently well so that they may be used however. 
\begin{lem}\label{plite} Suppose $M_1$, $M_2$ are two submanifolds of $\R^N$ such that for each compact domain $K$ there exists $C(K)$ so that Area$(M_i \cap K) < C(K)$, and whose mean curvature flows exist on the finite interval $[0,T]$ and $|A|, |\nabla A|$ are uniformly bounded along them by say $C$. Then given $\epsilon, R > 0$, there exists $R'> R$ so that if $M_1 \cap B(0,R') = M_2 \cap B(0, R')$ then $(M_1)_t \cap B(0,R)$ is $\epsilon$--close in $C^2$ local graphical norm to $(M_2)_t \cap B(0,R)$ for all $t \in [0,T]$. \end{lem}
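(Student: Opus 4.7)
The plan is to argue by contradiction using the Chen--Yin pseudolocality theorem (Theorem~\ref{PL}) together with smooth compactness for bounded-curvature mean curvature flows and uniqueness of MCF from smooth bounded-curvature initial data. Suppose the statement fails. Then for some fixed $\epsilon, R, T$ and fixed bounds $C, \{C(K)\}$ there exists a sequence $R_k' \to \infty$ together with pairs $(M_1^k, M_2^k)$ satisfying the hypotheses with these uniform bounds and agreeing on $B(0,R_k')$, yet whose MCFs $(M_i^k)_t$ differ by more than $\epsilon$ in the local $C^2$ graphical norm on $B(0,R)$ at some time $t_k \in [0,T]$; pass to a subsequence with $t_k \to t_\infty$.

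The initial bound $|A|^2 \leq C$ guarantees that each $M_i^k$ is graphic on balls of radius $r_0 \sim C^{-1/2}$ around each of its points, with $|A| \leq r_0^{-1}$ there. After shrinking $r_0$ further if necessary and iterating the pseudolocality estimate over a finite number of short time intervals of length at most $\epsilon_{PL}^2 r_0^2$ (where $\epsilon_{PL}$ is the constant in Theorem~\ref{PL}), one obtains a uniform bound $|A|(x,t) \leq C'$ on $(M_i^k)_t$ for $x$ in a neighborhood of $B(0,R)$ and $t \in [0,T]$, valid for all $k$ large enough that $R_k'$ exceeds the spatial range appearing in the iteration. Standard Ecker--Huisken interior derivative estimates \cite{EH} then promote this to uniform $C^\infty_{\mathrm{loc}}$ bounds on the two flows in a neighborhood of $\overline{B(0,R)} \times [0,T]$.

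Combined with the area bound on compacts, these bounds permit us to extract subsequences so that $M_i^k \to M_i^\infty$ smoothly on compacts and $(M_i^k)_t \to (M_i^\infty)_t$ smoothly on compact space--time subsets, with each limit a smooth MCF on $[0,T]$. Because $M_1^k$ and $M_2^k$ agree on $B(0,R_k')$ with $R_k' \to \infty$, the two initial limits coincide as subsets of $\R^N$, and local uniqueness of smooth MCF with bounded curvature (see, e.g., \cite{CY}) forces $(M_1^\infty)_t = (M_2^\infty)_t$ throughout $[0,T]$. This contradicts the $\epsilon$-discrepancy at $t_\infty$ on $B(0,R)$, which passes to the limit by smooth convergence. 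The chief technical obstacle is organizing the iterated application of pseudolocality so that the spatial domain on which the curvature bound holds remains large enough to cover $B(0,R)$ for the entire interval $[0,T]$, uniformly in $k$; this is handled by choosing $r_0$ in terms of $C,\epsilon_{PL},T,R$ and running only finitely many iterations, each of which contracts the spatial domain by a controlled factor.
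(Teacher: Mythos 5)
Your overall strategy is the same as the paper's: argue by contradiction with a sequence of pairs agreeing on balls $B(0,R_k')$, $R_k' \to \infty$, extract limits by compactness using the curvature and area bounds, and contradict the $\epsilon$-discrepancy on $B(0,R)$ via the Chen--Yin uniqueness theorem \cite{CY}. The genuine problem is the inserted step in which you claim a uniform bound $|A| \leq C'$ on a neighborhood of $B(0,R)$ for all $t \in [0,T]$ by iterating Theorem~\ref{PL}. Each application of that theorem requires $|A| \leq r^{-1}$ on a ball of radius $r$ at the starting time and returns only the \emph{weaker} bound $(\epsilon_{PL} r)^{-1}$, on the \emph{smaller} ball of radius $\epsilon_{PL} r$, and only for a time increment at most $\epsilon_{PL}^2 r^2$. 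Hence at the $j$-th iteration the admissible radius is at most $\epsilon_{PL}^{j} r_0$ and the time gained is at most $\epsilon_{PL}^{2j} r_0^2$, so the total time covered is bounded by $\epsilon_{PL}^2 r_0^2/(1-\epsilon_{PL}^2)$ with $r_0 \lesssim C^{-1/2}$ forced by the hypothesis, while the spatial ball shrinks geometrically to a point. No choice of $r_0$ and no finite number of iterations reaches a general $T$ while retaining a neighborhood of $B(0,R)$, so the claimed uniform curvature bound on $[0,T]$ does not follow. Indeed no such bound follows from the hypotheses at all: only the initial second fundamental form is controlled, and members of the contradiction sequence may develop arbitrarily large curvature near $B(0,R)$ at times well before $T$ while still flowing smoothly on $[0,T]$.

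Your instinct that some locally uniform control along the flows is needed in order to pass the flows themselves (not just the initial data) to the limit before quoting uniqueness is reasonable -- this is precisely the point the paper's own proof treats tersely, passing to limits directly from the stated curvature and area bounds and invoking \cite{CY} -- but pseudolocality iterated in time is not a mechanism that can supply it for arbitrary $T$. Note that in the lemma's actual application (inside lemma \ref{ncavoid}) the relevant time interval $[0,\delta]$ is short and the barrier surfaces have uniformly bounded curvature, so there a single short-time interior estimate (e.g. \cite{EH}, or one application of Theorem~\ref{PL}) does provide the needed bounds; for the lemma as stated with general $T$, your argument would need a different source of uniform estimates along the flows, or a restructuring in which such bounds are available where the lemma is used.
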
 
\begin{proof}
Without loss of generality $R= 1$. Suppose the statement isn't true; then there is a sequence of hypersurfaces $\{M_{1i}, M_{2i} \}$,  $R_i \to \infty$ and times $T_i \in [0,T]$ so that $M_{1i} = M_{2i}$ on $B(0,R_i)$ but $d((M_{1i})_{T_i}, (M_{2i})_{T_i})_{C^2} > \epsilon$ in $B(0,1)$, satisfying the curvature and area bounds. By passing to subsequences of the flows via Arzela-Ascoli using the curvature and area bounds we get twice differentiable flows $(M_{1\infty})_t$, $(M_{2\infty})_t$ in the limit defined on $[0,T]$ and with $|A|$ uniformly bounded by $C$. Using the finiteness of $T$ we may also pass to a further subsequence to arrange that the times $T_i$ converge to, say, $T_\infty$. We see then that the two flows initially agree but at $T_\infty \in [0,T]$ do not. This is a contradiction to theorem 1.1 in \cite{CY}, where they extend classical uniqueness theorems for the flow to the noncompact setting. 
\end{proof}

\begin{proof}[Proof of theorem \ref{converge}:] By the short time existence of the flow, the classical flow of $M$ is smooth for a short time, say up to time $t = 1$ for concreteness. We first consider the level set flow starting from $M_{1/2}$ then so that one may speak of approximating surgery flows to $M_t$ for short times both after and before a given time $t > 1/2$. For any $0< \delta < 1/2$ we have that the level set flow $\overline{L}_{t}$ of $M_{1/2}$ initially satisfies $S^i_{1/2 + \delta} \subset \overline{L}_0 \subset S^i_{1/2 - \delta}$ where the set inclusion is in terms of associated bounded shrinking/decreasing domains; of course for all $i$ $S^i_{1/2 \pm \delta}$ is simply $M_{1/2 \pm \delta}$.
$\medskip$

Now on a given bounded time interval $[0,T]$ the surgeries of the $S^i_t$ as discussed may only appear in a bounded ball $B \subset \R^3$, and outside $B$ the flows will be smooth with uniformly bounded curvature which we may arrange to be as small as we wish by expanding $B$. Furthermore we note that in $M \cap B^c$ every point has a neighborhood of diameter bounded uniformly below where $M$ can be written as a single graph. Lastly, we note even in $B$ for a fixed $i$ the curvature $A$ and its derivatives will be unifomly bounded in terms of ${F_{neck}}_i$ (again suppresing the index $k$). 
$\medskip$
 
Now consider $\epsilon > 0$ given by the minimum of the first surgery times of $S^i_{1/2 + \delta}$ and $S^i_{1/2 - \delta}$ for some choice of $i$ to be determined. By capping the surfaces appropriately we may approximate $S^i_{1/2 + \delta}$ and $S^i_{1/2 - \delta}$ in a ball of radius $j$ centered at any given point by smooth closed compact surfaces disjoint from $\overline{L}_0$ so that they agree with the original surface in a ball of radius $j/2$. By the global curvature bounds on $S^i_t$ for $t \in [0,T]$ and the second point above (graphcial in uniformly large balls) these can be arranged to have uniformly bounded $|A|, |\nabla A|$ so that their flows will exist smoothly for some uniform forward time $\epsilon_1$ which, at least just using crude estimates with the evolution equation for curvature, is potentially less than $\epsilon$. Similarly one may estimate their entropy to see they will also have uniformly bounded area in any given compact set on $[0,T]$ by monotonicity, so by lemma \ref{plite} then on the time interval $[0, \epsilon_1]$ $\overline{L}_t$ will stay disjoint from $S^i_{1/2 + \delta + t}$ and $S^i_{1/2 - \delta + t}$. If $\epsilon_1 < \epsilon$, we may then iterate the argument so without loss of generality $\epsilon < \epsilon_1$.  We next consider what happens when the surgery is performed, arguing much as in Lauer \cite{Lau}. 
$\medskip$

Indeed, considering points arbitrarily far from the origin the distance between $L_t$ and the surgery flows must tend to zero but one can get more refined information using Illmanen's localized avoidance principle with the (explicitly known) initial data in a suitably large ball containing $B$. By it and lemma \ref{con} (see Theorem C.3 in \cite{CCMS}; surgery flows are weak set flows between surgery times) the distance between $\overline{L}_t$ and the surgery flows is bounded below within $B$ on $[0, \epsilon]$ by, say, $\sigma > 0$ independent of surgery parameters. With this in mind for $t = \epsilon$ one can see as in proposition 2.2 of \cite{Lau} that if $i$ is large enough $\overline{L}_t$ will remain disjoint from the surgery flows postsurgery and remain distance $\sigma$ away from them in $B$: the point is the surgery caps are placed on high curvature necks, which $\overline{L}_t$ must not lay within by the distance lower bound. The argument can then be iterated to see that for $i$ large enough depending on $\delta$ and $T$, $\overline{L}_t$, $S^i_{1/2 + \delta + t}$, and $S^i_{1/2 - \delta + t}$ will remain disjoint on $[0,T]$, where the dependence on $T$ is because $\sigma$ would need to be adjusted when applying localized avoidance. 
$\medskip$

Denote by $K_t$ the associated motion of the domain bounded by $M_t$ that the flow is into, and similarly $K^i_t$ for $S^i_t$. From the above we have $\overline{L}_t \subset \lim\limits_{\delta \to 0^+} K_{1/2 - \delta + t} \setminus K_{1/2 + \delta + t}$. Now for each $\epsilon > 0$ there exists $i_0 >> 1$ so that for a fixed $i > i_0$, one can take $\delta$ small enough so that  $K^i_{1/2 - \delta + t} \setminus K^i_{1/2 + \delta + t}$ doesn't contain a ball of radius greater than $\epsilon/3$ from the reasoning above using \cite{Lau} and curvature bounds along a fixed surgery flow. By the Hausdorff convergence, there exists $i_1 >> 1$ so that for $i > i_1$ the distance between the spacetime track of this set and that of $K^i_{1/2 - \delta + t} \setminus K^i_{1/2 + \delta + t}$ is less than $\epsilon/3$, irrespective of choice of $\delta$. So, from considering a fixed $i$ greater than $i_0$ and $i_1$ we see that there must be $\delta > 0$ so that the spacetime track  $K_{1/2 - \delta + t} \setminus K_{1/2 + \delta + t}$ doesn't contain a ball of radius greater than $\epsilon$. Since when varying $\delta$ these sets are nested we see that $\lim\limits_{\delta \to 0^+} K_{1/2 - \delta + t} \setminus K_{1/2 + \delta + t}$ has empty interior as a subset of $\R^{n+2}$, implying that $\overline{L}_t$ is nonfattening arguing by contradiction and the comparison principle. Its easy to see that $M_{1/2 + t}$ is a weak set flow of $M_{1/2}$ itself so must be contained in $\overline{L}_t$. Using that $M_{1/2 + t}$ is a weak flow similarly by a barrier argument one can see for any fixed $t$ $\lim\limits_{\delta \to 0^+} K_{1/2 - \delta + t} \setminus K_{1/2 + \delta + t}$ has no interior, so from the \textit{lower} bound on $H - \frac{\langle X,\nu \rangle}{2}$ along all the sugery flows this difference is exactly $M_{t + 1/2}$ giving that $\overline{L}_t$ is contained in $M_{t+1/2}$ so that they are equal. 
$\medskip$

By using compact approximating barriers and lemma \ref{plite} again, this time to $M$ itself, one can see the level set flow, $L_t$, of $M$ agrees with the classical MCF $M_t$ of $M$ for a short time $s > 0$ which can be iterated to $t = 1/2$ using that $M_t$ is smooth with bounded curvature for $t < 1/2$, so that from above $M_t$ defined in terms of limits of surgery flows agrees with the level set flow $L_t$ of $M$ for all $t > 0$. By back parameterizing, the same is true for the ``MCF level set flow'' of $M$ as well. 
\end{proof}

\begin{rem}
Note that the surgery flows are locally graphical in uniformly large extrinsic balls in $B^c$ play an important role with example 7.3 in \cite{I3} in mind; this was used when we considered compact approximators to the surgery flows which had uniformly bounded curvature and essentially they serve as barriers to keep mass from ``rushing in'' from spatial infinity.
\end{rem}

\newpage

\subsection{Checking $M_t$ is a forced/renormalized Brakke flow}
$\medskip$

In this section we check item (3) of theorem \ref{LSF}, completing its proof. In this section and and the next we assume some familiarity with the Brakke flow or at least White's version of the Brakke regularity theorem \cite{W1}; we first recall the definition of Brakke flow:

$\medskip$

\begin{defn} A (n-dimensional integral) Brakke flow is a family of Radon measures $\mu_t$ such that:
\begin{enumerate}
\item For almost every $t \in I$ there exists an integral $n$-dimensional varifold $V(t)$ so that $V(t)$ has locally bounded first variation and has mean curvature vector $\vec{H}$ orthogonal to Tan$(V(t), \cdot)$ a.e. 
\item For a bounded interval $[t_1, t_2] \subset I$ and any compact set $K$,
\begin{equation}
\int_{t_1}^{t_2} \int_K (1 + H^2) d\mu_t dt < \infty
\end{equation}
\item (Brakke inequality) For all compactly supported nonnegative test functions $\phi$, 
\begin{equation}
\int_{V(t_0)} \phi d\mu_{t_0}  \leq \int_{V(0)} \phi d\mu_0 + \int_0^{t_0} \int_{V(t)} \phi H^2 - H \langle \nabla \phi, \nu \rangle - \frac{d\phi}{dt} d\mu_t dt
\end{equation} 
\end{enumerate}
We will say a Brakke flow is unit regular if it is true for the varifolds $V(t)$ defined above. 
\end{defn}

There are also related notions for Brakke flows with forcing terms, as discussed in \cite{W3}, where when considering flows $\vec{H} + P$ (3.10) there would be an additional term involving $ \nabla \phi \cdot P^\perp \nu - \phi \vec{H} \cdot P$. Here of course we will use $P = X$ and will refer to such flows as renormalized Brakke flows; the coordinate transformation of a smooth Brakke flow gives a smooth renormalized Brakke flow so implies the same is true for the Brakke flow constructed below by Brakke compactness for renormalized flows. The following, which is well known in the compact case, can be checked a variety of ways. For instance perhaps following \cite{Head}, using that $M_t$ is a limit of surgery flows which in a sense are ``nearly'' Brakke flows. Below, we use an argument borrowed from section 7 of Choi, Chodosh, Mantoulidis, and Schulze \cite{CCMS}.
$\medskip$

\begin{lem}\label{Brak}  $M_t$ is a unit regular integral renormalized Brakke flow on $[0, \infty)$. 
\end{lem}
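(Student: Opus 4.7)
The plan is to adapt the argument of White \cite{W1} in our forced, noncompact setting, using the surgery flows $S^{k,i}_t$ from Theorem \ref{surgery} as approximators and transferring the renormalized Brakke structure to $M_t$ in the limit $i \to \infty$.

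First I would check that each $S^{k,i}_t$ is a smooth renormalized Brakke flow between surgery times, satisfying equality in the forced Brakke inequality with position-vector forcing as in \cite{W3}. At a surgery time the flow excises necks of curvature $\sim H_{neck,k,i}$ and inserts standard caps of comparable scale, removing a net mass of order $H_{neck,k,i}^{-2}$ per surgery. Using Huisken monotonicity to bound the number of surgeries in any compact spacetime region $[0,T]\times K$, the total mass change due to surgery on $[0,T]\times K$ tends to zero as $i \to \infty$; moreover it has the correct sign, since surgery only removes area up to a vanishing cap contribution, so the Brakke inequality survives in the limit.

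Next, uniform local mass bounds from Huisken monotonicity and pseudolocality (Lemma \ref{plends}), combined with the standard compactness theorem for integral Brakke flows extended to the forced setting (which is unproblematic because the forcing vector $X$ is smooth and locally bounded), yield after passing to a subsequence a limiting renormalized Brakke flow $V_t$ whose support coincides with $M_t$ by Hausdorff convergence. For unit density I would argue separately on the regular and singular parts of $M_t$: away from the singular set the convergence $S^{k,i}_t \to M_t$ is smooth and graphical with multiplicity one by interior estimates and pseudolocality, giving density one; at singular points the $F$-noncollapsing of the $S^{k,i}_t$ established in Theorem \ref{surgery} is preserved in the limit, forcing every tangent flow to be a noncollapsed mean-convex smooth shrinker, hence a plane, sphere, or cylinder of multiplicity one, so Brakke regularity yields unit density almost everywhere.

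The main obstacle is controlling the surgery defects uniformly in $i$ across all the time slabs $[k, k+1]$ on which the surgery parameters are rechosen: one must bound both the number of surgeries and the mass excised per surgery in any compact spacetime region, uniformly as the parameters degenerate with $k$. Once this bookkeeping estimate is in place, the remaining steps reduce to routine applications of Brakke compactness and regularity in the forced setting.
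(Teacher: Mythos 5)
Your route --- treating the surgery flows $S^{k,i}_t$ as approximate renormalized Brakke flows and passing to the limit in $i$ --- is genuinely different from the proof in the paper, which never touches the surgery bookkeeping: there the Brakke property is obtained by adapting White's argument from section 7 of \cite{W1}, flowing graphs of $kf$ over the (back-parameterized) initial surface in $\R^{n+2}$ by Ecker--Huisken, extracting a limit Brakke flow that splits off a line, identifying its cross section with the nonfattening level set flow $M_t$ from the previous subsection, and then transferring to the renormalized setting via convergence of the graph normals, hence of $X^\perp$. Your alternative is plausible (the paper itself remarks it might work ``with some additional assumptions''), but as written it has a genuine gap at exactly the step you call the main obstacle.

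The gap: you assert that bounding the number of surgeries via Huisken monotonicity, together with the $O(H_{neck,k,i}^{-2})$ mass change per surgery, makes the total surgery defect on $[0,T]\times K$ vanish as $i\to\infty$. The natural count goes the wrong way: since each surgery removes area at least $c\,H_{neck}^{-2}$, area/monotonicity bounds only give at most $C\,H_{neck}^{2}$ surgeries, so (number of surgeries) times (mass per surgery) is $O(1)$, not $o(1)$. The same problem affects the localized Brakke inequality itself: a test function supported where a cap is inserted can see an instantaneous mass \emph{increase} of order $H_{neck}^{-2}$ at that surgery, and summing such contributions against $\sim H_{neck}^{2}$ surgeries gives no smallness. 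Closing this requires either exploiting the cancellation at each surgery between the inserted caps and the larger neck area removed in the same ball (the error is then controlled by the variation of the test function on the surgery scale, gaining a factor $H_{neck}^{-1}$), or invoking no-mass-drop/one-sided-minimization machinery as Head does in the compact two-convex case \cite{Head1}, via \cite{MS} or \cite{W}; neither is automatic here, since the parameters are rechosen on each slab $[k,k+1]$ and the flow is noncompact, so the estimate must in addition be localized using pseudolocality. Relatedly, Hausdorff convergence of supports alone does not identify the support of the limit measure with $M_t$ (mass could vanish on part of the Hausdorff limit), so a lower mass bound from noncollapsing or one-sided minimization is needed both there and in your unit-density step; with those ingredients supplied, the rest of your outline (forced Brakke compactness, smooth multiplicity-one convergence away from the singular set, tangent-flow analysis at singular points) is reasonable.
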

\begin{pf}
 Fixing a time interval $[- 1, \tau]$, where $\tau < 0$, and denoting by $\widetilde{M}_t$ the back-parameterized flow (from the RMCF to the the MCF) of $M_t$, we first note arguing as in proposition 7.3, lemma 7.4 and the following comments of \cite{CCMS} there is a (unit regular integral) Brakke flow whose support $\widetilde{B}_t = \widetilde{M}_t$ on $[-1, \tau]$. This is constructed by approximating the initial data $\widetilde{M}$ by compact hypersurfaces whose level set flow does not fatten, which is possible since nonfattening of the level set flow for compact hypersurfaces is a generic property, so that their level set flows correspond to/are the supports of Brakke flows by elliptic regularization. Taking a limit by Brakke compactness, we get a Brakke flow with initial data $\widetilde{M}$. 
 $\medskip$

In particular, and this will be used in the sequel, the elliptic regularization proceedure applied to the compact approximating hypersurfaces shows that there is a Brakke flow $\widetilde{N}_t$ out of $\widetilde{M} \times \R_{x > -1} \subset \R^{4}$ which splits off an open line segment $(-1, \infty)$ in the $x_{4}$ direction for all times (note we could pick any lower bound on this line segment by translation) and is a limit in any cylinder over a compact set $\widetilde{K} \in \R^{3} \times \{0\}$ of smoothly translating flows which along the limit are increasingly close to being cylinders in the $x_{4}$ direction over $N_t \cap \{x_{4} = 0\}$ in a uniform neighborhood of the $x_{4} = 0$ plane; they translate upwards in the $e_{4}$ direction and the tip of these translators lay well below the plane sufficiently far along the sequence for a fixed finite time interval. Note in the following that the rescaling factor in the renormalization change of coordinates is greater than 1, so the open cylinders above and domains won't be crushed down by the coordinate change. Fixing a compact set $\widetilde{K}$ and denoting the corresponding approximating sequence of smooth flows by $\widetilde{N}^i_t$, we have by applying the renormalization change of coordinates to them and taking a limit using Brakke compactness for renormalized flows that there is a renormalized Brakke flow $N_t$ of $M \times \R_{x > -1}$ on $[0, T]$ where $T$ is $\tau$ under change of coordinates. It splits off an open line segment so that restricted to $K$, a compact set which is contained in the image of $\widetilde{K}$ under the change of coordinates on the time interval $[-1, \tau]$, it is a limit of smooth RMCFs $N^i_t$ which are nearly vertical in the $x_{4}$ direction near the $x_{4} = 0$ plane; we emphasize this last fact because it will be used in the next subsection when discussing Brakke regularity. Taking the $x_{4} = 0$ slice of the $N_t$ gives a renormalized Brakke flow $B_t$ with initial data $M$. By the nonfattening of the level set flow of $M$ as shown in the last section, the support of this flow is contained in $M_t$. From a barrier argument similar to the one in the section above applied to the approximating flows, along with that they are cannonical boundary motions (so their supports are all boundaries of evolving sets), they are in fact equal. Taking $\tau \to 0$ and hence $T \to \infty$ gives the claim. \end{pf}

\subsection{Long term fate of the flow}\label{ltf}
$\medskip$

In this section we wish to show we may apply some deep results of White to the flow, mainly from \cite{W}. As above, we denote by $K_t$ the associated motion of the domain $M$ flows into, so that $M_t = \partial K_t$. Recall now the Brakke regularity theorem, which roughly says that if in a spacetime neighborhood of a point the Gaussian densities of a Brakke flow are close to one the flow is smooth with bounded curvature. White in \cite{W1} showed this was also true in many circumstances for Brakke flows with a forcing term. In particular, the construction above, since the flow is a limit of ``nearly vertical'' smooth flows in a neighborhood of $x_{n+2} = x_4 = 0$ over any compact domain, gives that White's Brakke regularity theorem applies to $M_t$ where the parameters at a point in the statement of Brakke regularity theorem depend on distance to origin -- in particular, see White \cite{W1}, sections 4 and 7. 
$\medskip$

 A shrinker mean convex/inward RMCF level set flow gives a mean convex foliation in the Gaussian metric by \ref{Hrelation} so, amongst other results of White's work holding for our flow, therefore satisfies the one sided minimization property of White with respect to the Gaussian area. This gives the results of section 3 of \cite{W} hold -- of course, with minor modifications of the statements to reflect we are working in the RMCF.  Also since the flow is shrinker mean convex, blowups at any point will be mean convex in the traditional sense. 
 $\medskip$
 
Since $M_t$ is the RMCF level set flow as discussed in the previous section, $K_t$ is a nested weak set flow so the expanding hole theorem and the rest of section 4 of \cite{W} holds. Sections 5,6,7 of \cite{W} hold because the flow is shrinker mean convex, corresponds to a Brakke flow, and satisfies one sided minimization as discussed above. 
$\medskip$

As a consequence of the above sections White's sheeting theorem in section 8 of \cite{W} holds to see that near any multiplicity 2 tangent plane, the flow can be written as the union of two graphs. In particular, by dimension reduction static and quasistatic limit flows are smooth (since we are considering flows in dimension less than 7). Compare also with proposition 8.6 to lemma 8.8 of \cite{CCMS}. With all this in mind it follows one may apply section 11 of \cite{W} to see that:
\begin{thm}\label{minlimit} If $\lim\limits_{t \to \infty} M_t$ is nonempty, the limit is a smooth stable self shrinker. 
\end{thm}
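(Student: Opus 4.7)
The plan is to follow the template of Section 11 of White~\cite{W}, now that the preceding subsection has verified that all its key ingredients --- Brakke regularity, one-sided minimization, the expanding hole theorem, the sheeting theorem, and smoothness of static and quasi-static blowup limits --- apply to our RMCF setting.

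First, I would extract a subsequential limit. By Theorem~\ref{LSF}(1) the regions $K_t$ are nested and decreasing, so $K_\infty := \bigcap_t K_t$ is well defined and the assumed nonempty limit $\lim_{t\to\infty} M_t$ agrees set-theoretically with $\partial K_\infty$. Fix a sequence $t_i \to \infty$. Using Lemma~\ref{Brak} together with Brakke compactness, applied with the locally uniform mass bounds that follow from the polynomial volume growth inherited from the entropy bound on $M$ (via Theorem~\ref{proper} and Huisken monotonicity), extract a limit renormalized Brakke flow $\mu_\infty$ by translating in time. The nested/inward property of $K_t$, combined with the positivity of $H-X^\perp/2$ along the approach, forces $\mu_\infty$ to be time-independent.

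Second, I would identify the limit as a smooth self-shrinker. A static renormalized Brakke flow satisfies $\vec H + \tfrac12 X^\perp\nu = 0$ in the varifold sense, so its support is a stationary varifold in the Gaussian metric, i.e. a (possibly singular) self-shrinker. By the Brakke regularity theorem, the sheeting theorem, and dimension reduction noted in the excerpt, which terminate in smooth static shrinker-mean-convex tangents in our ambient dimension, the support is smooth and embedded; a multiplicity-two sheet would force two tangent self-shrinker graphs to solve the same second-order equation with the same $1$-jet, which violates unique continuation.

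Third, I would establish stability. Because the flow is shrinker mean convex, $M_t$ is a one-sided foliation of a tubular neighborhood of $M_\infty$ by smooth surfaces approaching $M_\infty$ monotonically, with normal speed $u_t := H - X^\perp/2 > 0$ on each leaf by Lemma~\ref{con}. On any compact $K \subset M_\infty$, for $t$ sufficiently large $M_t$ is a normal graph over a neighborhood of $K$ with graphing function $\rho_t$, and the linearization of the resulting evolution equation at $M_\infty$ is the Gaussian Jacobi operator $L = \Delta + |A|^2 - \tfrac12\langle X, \nabla(\cdot)\rangle + \tfrac12$. Normalizing $\rho_t$ by its value at a fixed interior point $p_0 \in M_\infty$ and passing to the limit, Harnack and Schauder estimates yield a positive $L$-harmonic function $\phi$ on $M_\infty$, which by the standard argument implies $L$-stability of $M_\infty$ as a minimal surface in the Gaussian metric.

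The main obstacle I anticipate is Step 3: justifying the limiting construction on a noncompact self-shrinker without losing positivity or control at infinity. I would handle this by exhausting $M_\infty$ by compact sets $K_j \nearrow M_\infty$, applying the elliptic Harnack inequality on each $K_j$ to obtain two-sided comparison for $\rho_t$, and then extracting a Schauder-diagonal subsequential limit; the polynomial volume growth of both $M_t$ and $M_\infty$, combined with the pseudolocality estimate of Lemma~\ref{plends}, prevents the limit from escaping to infinity or collapsing to zero away from $p_0$, and the strong maximum principle then upgrades positivity at $p_0$ to positivity everywhere.
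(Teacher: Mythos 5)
Your proposal is correct and follows essentially the same route as the paper: after the verifications of the preceding subsection, the paper simply invokes Section 11 (Theorem 11.1) of White's mean-convex paper \cite{W}, noting that the compactness assumption there is harmless because stability can be checked on a compact exhaustion, which is exactly the exhaustion/positive-Jacobi-function argument you spell out. Your write-up just fills in more of the internal details of White's argument (static limit, regularity, one-sided foliation giving stability) than the paper records.
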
 
In more detail concerning the claim of stability let's denote the limit shrinker by $N$; note it will be of bounded entropy and hence polynomial volume growth. In any bounded intrinsic ball $B$ it will be stable with respect to variations supported in that ball by White's theory and in particular the lowest Dirichlet eigenvalue of the Jacobi operator on $N \cap B$ is bounded below by 0. As discussed in the proof of lemma 9.25 of \cite{CM} the lowest eigenvalue $\lambda_1$ of the Jacobi operator on $N$ is the limit of them on an exhaustion of $N$ by balls implying $\lambda_1 \geq 0$, giving a contradiction considering its no more than $-1/2$ as also shown in that paper, specifically theorem 9.2. We note here that in the eventual application of this fact below one could instead use the Frankel theorem for shrinkers to finish, so stability and even smoothness of the limit actually doesn't seem to be crucially important.  
$\medskip$

\begin{rem} We point out one should indeed expect singularities should occur. For example an outward perturbation of the Angenent torus will develop under RMCF a neckpinch about its axis of rotation -- in this case, the RMCF will then flow outwards to spatial infinity, a consequence of the Angenent torus being unknotted. Hence it seems necessary to consider a weak flow as above -- see also the recent work of of Lin and Sun \cite{LS} which says this is the case for all compact nongeneric shrinkers. 

\end{rem}

\section{Proof of Theorem \ref{thm:unknotted}}

Our goal as in \cite{MW1} and \cite{L} is ultimately to appeal to a Waldhausen type theorem \cite{W}, perhaps the best known formulation being the following:  
\begin{thm}\label{W} Suppose $M$ is a Heegaard splitting of $S^3$ of genus $g$. Then it is isotopic to the standard genus $g$ surface of $S^3$.
\end{thm}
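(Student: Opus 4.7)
The plan is to prove Theorem \ref{W} by induction on the Heegaard genus $g$, with Alexander's theorem serving as the base case and a reducibility argument driving the inductive step. Write $S^3 = V_1 \cup_{M} V_2$ with $V_1, V_2$ genus-$g$ handlebodies and $M = \partial V_1 = \partial V_2$, and let $M_g^{\mathrm{std}} \subset S^3$ denote the standard genus-$g$ Heegaard surface (the boundary of a regular neighborhood of a standardly embedded wedge of $g$ circles).

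For the base case $g=0$, $M$ is an embedded $2$-sphere in $S^3$. By Alexander's theorem (the smooth Schoenflies theorem in dimension $3$), every smoothly embedded $S^2 \subset S^3$ bounds $3$-balls on both sides, and any two such spheres are ambient isotopic. Hence $M$ is isotopic to $M_0^{\mathrm{std}}$.

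For the inductive step $g \geq 1$, I would first establish the key reducibility lemma: there exists an essential simple closed curve $\gamma \subset M$ that bounds embedded compression disks $D_1 \subset V_1$ and $D_2 \subset V_2$. Granting this, $S = D_1 \cup_\gamma D_2$ is a smoothly embedded $2$-sphere in $S^3$ that intersects $M$ in a single curve; by Alexander's theorem $S$ bounds a $3$-ball $B \subset S^3$. Cutting both handlebodies along $D_1$ and $D_2$ and removing/absorbing $B$ yields a Heegaard splitting of $S^3$ of genus $g-1$. By the inductive hypothesis this reduced splitting is isotopic to $M_{g-1}^{\mathrm{std}}$, and reversing the reduction (stabilizing once in the standard way along a trivial arc in $B$) shows $M$ is isotopic to $M_g^{\mathrm{std}}$. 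The one subtlety to check here is that the reduction is genuinely destabilizing rather than just reducing: one verifies that the $S^2$ coming from $D_1 \cup D_2$ splits off an unknotted handle, using that its complementary ball $B$ is standard.

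The hard part is the reducibility lemma itself; this is the technical heart of Waldhausen's argument. The approach is to fix complete meridian disk systems $\mathcal{D} = \{D_1,\ldots,D_g\}$ for $V_1$ and $\mathcal{E} = \{E_1,\ldots,E_g\}$ for $V_2$, made transverse so that $\mathcal{D} \cap \mathcal{E}$ consists of a finite collection of arcs and simple closed curves, and then to minimize $|\mathcal{D}\cap\mathcal{E}|$ by innermost-disk and outermost-arc surgeries. Any innermost closed intersection curve on a disk of $\mathcal{E}$ bounds a subdisk in $V_2$ whose boundary also bounds a subdisk in $V_1$; cut-and-paste either reduces intersections or (if already minimal) directly produces a pair of meridian disks $D\subset V_1$, $E\subset V_2$ with $\partial D = \partial E$, which is the desired $\gamma$. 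To force the minimization to terminate in the desired configuration rather than in an entirely disjoint pair $\mathcal{D},\mathcal{E}$ (which is impossible for $g \geq 1$ since the splitting would then compress $S^3$ to something non-simply-connected), one uses $\pi_1(S^3) = 1$: the class represented in $\pi_1(M)$ by any $\partial D_i$ must be trivial in $\pi_1(S^3)$, and combined with the Loop Theorem applied inside $V_2$ this forces a common compressing curve to exist. This is the single step I expect to be the main obstacle; once reducibility is in hand, the rest of the induction is essentially bookkeeping using Alexander's theorem.
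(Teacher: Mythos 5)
First, note that the paper itself does not prove this statement: it is Waldhausen's 1968 theorem, quoted and used as a black box, so your proposal must stand entirely on its own. Its outer layer is fine and standard: Alexander's theorem handles $g=0$, and if one grants that every genus $g\ge 1$ splitting of $S^3$ admits an essential curve $\gamma\subset M$ bounding compressing disks $D_1\subset V_1$, $D_2\subset V_2$, then the sphere $D_1\cup_\gamma D_2$ splits the surface as a connected sum of lower-genus splittings of $S^3$ and induction closes the argument. But the ``reducibility lemma'' you defer to a sketch \emph{is} the theorem for all practical purposes, and the sketch does not work.

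Concretely: (i) the closed curves of $\mathcal{D}\cap\mathcal{E}$ lie in the interiors of the disks, not on $M$, and an innermost one can always be eliminated by an innermost-disk swap (handlebodies, and $S^3$, are irreducible); so at minimal intersection there are simply no closed curves, and this step can never ``directly produce'' disks with $\partial D=\partial E$. All of the content is in the arcs, and you give no argument that outermost-arc surgeries terminate in a common boundary curve rather than merely wandering through disk systems. (ii) The appeal to $\pi_1(S^3)=1$ together with the Loop Theorem is vacuous here: each $\partial D_i$ already bounds $D_i$, so its triviality in $\pi_1(S^3)$ carries no information, and the Loop Theorem applied inside $V_2$ only reproduces the existence of compressing disks for $V_2$ — automatic for a handlebody — and cannot force a single curve to compress on \emph{both} sides simultaneously. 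If such soft $\pi_1$ reasoning sufficed, it would show splittings are reducible whenever no fundamental-group obstruction is present, which is false in general (genus-one splittings of lens spaces, and strongly irreducible splittings of many manifolds, are irreducible). (iii) Your claim that disjoint complete systems $\mathcal{D},\mathcal{E}$ are impossible for $g\ge1$ is also unjustified beyond genus one; in fact that ``weakly reducible'' configuration is handled in the literature by Casson--Gordon type arguments leading to reducibility, not by a contradiction with simple connectivity. The genuine proofs of the reducibility of positive-genus splittings of $S^3$ — Waldhausen's original combinatorial analysis, or the later Scharlemann--Thompson thin-position proof — require substantially more than intersection minimization plus the Loop Theorem, so as written your proposal has a gap exactly at its load-bearing step.
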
 

A Heegaard splitting is a surface in a 3 dimensional (for now, take it to be comapct) manifold $N$ which splits $N$ into two handlebodies: regions homeomorphic to topologically closed regular neighborhoods of properly embedded, one-dimensional CW-complexes in $N$. We define standard embeddedness for compact closed surfaces as surfaces isotopic to any of the following. The standardly embedded torus we take to be the embedding $T^2\hookrightarrow \R^3 \sim S^3 \setminus \{\infty\} \hookrightarrow S^3$ given by rotating the unit circle $S(2,1)$ in the $xy$ plane about the $z$ axis. The standardly embedded genus $g$ surface can be constructed by taking $g$ standardly embedded tori, arranging so that their centers fall along a line and so that their convex hulls are pairwise disjoint, and taking a connect sum of adjaicent tori using straight cylinder segments at two closest points. 
$\medskip$

 In our noncompact case, we will say that a one ended surface is standardly embedded if there is an isotopy which takes it to the connect sum with a standardly embedded genus $g$ surface attached to a plane, in agreement with Frohman and Meeks \cite{fm}; in particular see figure 1 and the surrounding discussion in \cite{fm}. Indeed such an isotopy would have to have ``infinite speed;'' for a simple example why this is necessary consider defining an isotopy from a conical to a planar end. However, below it will be clear no isotopies which unknot by sending problematic regions of $M$ to spatial infinity will be used. 
$\medskip$

Focusing momentarily on just the closed case, to use Waldhausen's theorem one needs conditions for when a surface is a Heegaard spliting. Lawson in \cite{L} gives the following criteria, where (2) is particularly useful for verfication using ideas from geometric analysis. 
\begin{lem}\label{equiv} Let $M$ be a closed hypersurface in $S^3$ and denote by $R_{in}$ and $R_{out}$ the inner and outer regions bounded by $M$. Then $M$ is a Heegaard splitting exactly when either (and hence both) of the two statements in the following is true:
\begin{enumerate}
\item  The inclusion maps $\iota: M\to R_{out}$,  $\iota: M\to R_{in}$ both induce surjections of fundamental groups $\iota_* : \pi_1(M) \to \pi_1(R_{out}), \pi_1(R_{in})$. 
\item $\widehat{R_{out}}$ and $\widehat{R_{in}}$, where $\widehat{R}$ denotes the universal cover, have path connected boundary. 
\end{enumerate}
\end{lem}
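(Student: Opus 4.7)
The plan is to first establish (1) $\iff$ (2) by covering space theory, then dispatch the two directions relating (1) with being a Heegaard splitting by standard 3-manifold topology. For (1) $\iff$ (2), I would take either region $R \in \{R_{in}, R_{out}\}$ with universal cover $p: \widehat{R} \to R$ and observe that the restriction of $p$ to $\partial\widehat{R} = p^{-1}(M)$ is a covering over $M$. Since the deck group $\pi_1(R)$ acts on $\partial\widehat{R}$ with stabilizer of a chosen component equal to the image $\iota_*(\pi_1(M))$, the orbit-stabilizer bijection identifies the set of connected components of $\partial\widehat{R}$ with the coset space $\pi_1(R)/\iota_*(\pi_1(M))$. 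Hence $\partial\widehat{R}$ is path-connected exactly when $\iota_*$ is surjective, and applying this to both regions yields the equivalence.

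The ``Heegaard $\Rightarrow$ (1)'' direction is routine: if $R$ is a genus-$g$ handlebody, it deformation retracts onto a wedge of $g$ circles, and these circles can be represented as a system of longitudes embedded in $\partial R = M$ which generate $\pi_1(R) \cong F_g$, witnessing the surjection of $\iota_*$.

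The main content is the converse ``(1) $\Rightarrow$ Heegaard,'' and this is the hard part. I would induct on the genus $g$ of $M$. The base case $g=0$ is the Alexander--Schoenflies theorem in $S^3$: each side of an embedded 2-sphere is a 3-ball, which is the genus-zero handlebody. For $g \geq 1$, I would first observe that $\ker(\iota_*)$ is nontrivial: the half-lives-half-dies theorem (a consequence of Poincar\'e--Lefschetz duality for the pair $(R, M)$) shows that the kernel of $H_1(M) \to H_1(R)$ already has rank exactly $g$. Papakyriakopoulos' loop theorem then produces a properly embedded compressing disk $D \subset R$ whose boundary is essential in $M$. Cutting $R$ along $D$ gives a compact 3-manifold $R^*$ embedded in $S^3$ whose boundary is a closed surface of total genus $g-1$ (possibly with two components, if $\partial D$ separates $M$). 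Assuming the surjection hypothesis is inherited by the pieces of $R^*$, the inductive hypothesis applies and each piece is a handlebody; reattaching the 1-handle dual to $D$ then exhibits $R$ as a handlebody of genus $g$.

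The main obstacle in making the inductive step rigorous is checking that the $\pi_1$-surjection hypothesis is actually preserved under the compression along $D$, especially in the separating case where $R^*$ has two components, and that reattaching the 1-handle produces a genuine handlebody. Both issues can be handled by a careful Van Kampen diagram chase, using that $D$ is simply connected so that $\pi_1(R)$ decomposes as a free product (separating case) or HNN extension (non-separating case) of the pieces' fundamental groups, and pulling surjectivity back through this decomposition.
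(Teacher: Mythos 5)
The paper does not actually prove this lemma: it is quoted as Lawson's criteria and used as a black box, with the reference \cite{L} (and Waldhausen \cite{W}, Frohman--Meeks \cite{fm}) doing the work. So there is no in-paper argument to compare against; what you have written is essentially the classical proof that sits behind the citation, and its skeleton is sound: the coset/orbit-stabilizer identification of the components of $\partial\widehat{R}$ with $\pi_1(R)/\iota_*\pi_1(M)$ correctly gives $(1)\Leftrightarrow(2)$, the handlebody-to-surjectivity direction is routine as you say, and the converse by compressing along a disk from the loop theorem and inducting on genus (with Alexander--Schoenflies as the base case) is the standard route.

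Two points need repair or completion. First, as written the appeal to the loop theorem is a genuine jump: the loop theorem requires a nontrivial kernel of $\pi_1(M)\to\pi_1(R)$, and a nontrivial kernel on $H_1$ (which is what half-lives-half-dies gives) does not imply this -- the boundary torus of a nontrivial knot exterior has $H_1$-kernel of rank $1$ but is $\pi_1$-incompressible. The fix uses your standing hypothesis: if $\iota_*$ were injective as well as surjective it would be an isomorphism, so $H_1(M;\Bbb Q)\to H_1(R;\Bbb Q)$ would be an isomorphism of rank $2g$, contradicting the rank-$g$ image forced by half-lives-half-dies once $g\geq 1$; hence the $\pi_1$-kernel is nontrivial and the loop theorem applies in the region where surjectivity is assumed. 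Second, the inheritance step you flag as the main obstacle does go through, and by exactly the mechanism you suggest: writing $\pi_1(R)\cong G_1*G_2$ (separating case) or $G^**\Bbb Z$ (non-separating case), the image of $\pi_1(M)$ is a subgroup of the form $\langle H_1\cup H_2\rangle\cong H_1*H_2$ with $H_i\leq G_i$, respectively $\langle H\cup\{h_0t\}\rangle\cong H*\Bbb Z$ with $H\leq G^*$, and the normal form in a free product shows such a subgroup meets each vertex factor exactly in $H_i$ (resp.\ $H$); equality with the whole group therefore forces $H_i=G_i$ (resp.\ $H=G^*$), which is precisely surjectivity for each cut piece. Two small bookkeeping items: in the separating case the two capped-off components have genera $g_1+g_2=g$ with each $g_i\geq1$ (total genus $g$, not $g-1$), so you should induct on the genus of each connected piece, which is indeed $\leq g-1$; and the dichotomy ``$\partial D$ separates $M$ iff $D$ separates $R$'' is legitimate here because $H_2(R;\Bbb Z_2)=0$ for a complementary region in $S^3$, so the disk's class in $H_2(R,\partial R;\Bbb Z_2)$ is detected by its boundary.
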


The labeling of one region as the outer one and the other as inner of course is arbitrary. Throughout we will refer to $R_{out, in}$, et cetera when we want to discuss the pairs $R_{out}, R_{in}$ simultaneously in the fashion that the argument would apply either using $R_{out}$ or $R_{in}$, which is often (but not always) the case. We will also often refer to the first criterion as ``$\pi_1$ surjectivity'' with respect to a given domain. 
$\medskip$

In \cite{MW1} we compactify $\R^3$ to consider their self shrinker as a hypersurface in $S^3$ so as to apply \ref{W} but in the present case there are ends which makes the state of affairs for Waldhausen type theorems more subtle. To see this note that cutting off a noncompact surface, at least one with suitably well controlled ends, by a large ball the question is closely connected to topological uniqueness problems for Heegaard splitings with boundary of balls defined appropriately. Perhaps suprisingly, there are examples of knotted minimal surfaces with boundary constructed by P. Hall in \cite{Hall} which give in turn topologically nonstandard Heegaard splitings of the three ball. The good news is that these have multiple boundary components; if there is just \textit{one} boundary component then a Waldhausen theorem along with the $\pi_1$ surjectivity criterion holds, see section 2 of \cite{Meeks} as well as \cite{fm}. 
$\medskip$

With this in mind note that if in a ball $B$ $M \cap B$ is a Heegaard spliting and that $M\cap B^c \sim \R^2 \setminus D(0,1)$ its easy to see $M$ is standardly embedded by isotoping $M \cap B^c$ to be planar after applying Waldhausen's theorem within the ball and the isotopy extension theorem along the ball's boundary. Its also easy to see when $B$ is large enough that $M\cap B^c \sim \R^2 \setminus D(0,1)$ the inclusion map of $M \cap B$ into either of its bounded components will be surjective on fundamental groups if it is so for $M$, so in summary: 

\begin{lem}\label{importantrem} Lawson's criteria given in lemma \ref{equiv} above can be used to show an asymptotically conical surface with one end is topologically standard. 
\end{lem} 

From here on out denote by $M$ a self shrinker which either has a single, asymptotically conical, end or is compact. First we consider the asymptotically conical case and afterwards we discuss the case it is compact.
$\medskip$

\subsection{\textit{M} has an asymptotically conical end}

Suppose that at least one of $\widehat{R_{out,in}}$ has disconnected boundary (which one in particular is unimportant), so that it has at least two path components $A$ and $B$.
$\medskip$

By lemmas 4.1 and 4.2 of Bernstein and L. Wang \cite{BW} one may find, switching choice of normal depending on which domain $R_{out, in}$ is in question, a shrinker mean convex perturbation of $M$ to $M^{\epsilon} \in \Sigma$ the set defined in section 3. In short it is a perturbation which, on each end, asymptotes to the original asymptotic cone using the first eigenfunction of the Jacobi operator. This perturbation will lay entirely within the domain we chose and will be entropy decreasing. 
$\medskip$

Theorem \ref{LSF} then gives us a renormalized mean convex/inward level set flow $M_t$ with intial data $M^\epsilon$ which exists for all time (by a small abuse of notation), and if it is nonempty the limit will be a stable self shrinker. To tell if it is nonempty we will consider the flow in the universal cover which we now discuss. To begin we see one may lift a submanifold/varifold $N$ of $R_{out, in}$ to the lift $\widehat{N}$ of it in $\widehat{R_{out,in}}$ by, for every open set $U$ in an open covering of $\widehat{R_{out,in}}$ on which the projection map $\pi$ is locally a diffeomorphism, defining $U \cap \widehat{N} = U \cap \pi^{-1}(\pi(U) \cap N)$. This is well defined and by similarly lifting the outward normal gives a way to lift perturbations and flows -- if the lift of $N$ has multiple components $C_i$ one can also wish to define the flow out of a single component $C$ by lifting the flow of $N$ and throwing out the components which did not originally emanate from $C$.  For surgery flows choosing components to follow is well defined because if two originally disjoint boundary components became conjoined at a later time it must be due to a surgery, but this can't be the case since a surgery and hence its lift for fine enough parameters can only disconnect. Hence it is well defined for the level set flow as well. Also, we see by our proceedure that if $N$ is embedded and boundaryless so is $\widehat{N}$. 
$\medskip$

With that in mind to show the flow of  $M^\epsilon$ will be nonempty we consider the lifts of the perturbation $M^\epsilon$ to the universal cover of $\widehat{R_{out, in}}$ to get a graphical perturbation $A^\epsilon$ of $A$ for $\epsilon$ small -- this is well defined in that there is indeed a connected component of the lift of $M^\epsilon$ for each connected component of $\widehat{M}$, graphical over their component, as a consequence of the fact that contractible open sets are evenly covered and that $M$ has a uniform tubular neighborhood (measured in Euclidean distance) since its asymptotically conical. Similarly, $A^\epsilon$ is embedded for $\epsilon$ small. Furthermore, we may consider the lifted approximating surgery flows $\widehat{S_t}$ and $\widehat{M_t}$ of $M_t$ in $\widehat{R_{out, in}}$ which flow ``out'' of $A$ -- we need not consider the lift of the flow to the other boundary components as discussed above. We now discuss some properties of these lifted flows:
$\medskip$

\begin{lem}\label{props} Any lifted approximating flow $\widehat{S_t}$ with fine enough surgery parameters ($F_{th,k}$ large enough (for each $k$)) and hence $\widehat{M_t}$ satisfies the following properties:
\begin{enumerate}
\item $\widehat{S_t}$ will never collide with a boundary component of $\widehat{R_{out,in}}$
\item $\widehat{S_t}$ and hence $S_t$ is nonempty for all $t \in [0, \infty)$ 
\item Suppose $\widehat{R_{out,in}}$ has at least two boundary components $A$, $B$ and that $\widehat{M}_t$ flows out of $A$. Then for any curve $\gamma$ betewen $A$ and $B$ which has nonvanishing mod 2 intersection number with $A^\epsilon$, $\widehat{S_t} \cap \gamma \neq \emptyset$ for all $t \in [0, \infty)$ so that $S_t$ and hence $M_t$ will have a nonempty limit as $t \to \infty$. 
\end{enumerate}
\end{lem}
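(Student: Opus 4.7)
The plan is to establish the three items in turn, using the avoidance principle for the RMCF, pseudolocality at the ends, and a mod $2$ intersection number that is preserved under surgery.

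For (1), I would appeal to avoidance. Since $M$ is a static solution of the RMCF (any self shrinker is a fixed point) and $M^\epsilon$ was constructed disjoint from $M$, the surgery flow $S_t$ stays disjoint from $M$ downstairs, provided the surgery parameters are fine enough that each cap lies inside the inner osculating ball of the corresponding neck (the standard Haslhofer and Kleiner configuration, which keeps nested barriers nested across surgery). Lifting, $\widehat{S}_t$ and its Hausdorff limit $\widehat{M}_t$ stay disjoint from every lift of $M$, that is, from every boundary component of $\widehat{R_{out,in}}$.

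For (2), I would invoke Lemma \ref{plends}: pseudolocality preserves the asymptotically conical end of $M^\epsilon$ on every bounded time interval, so $S_t$ remains a smooth $C^2$ graph close to its cone outside a sufficiently large ball, in particular nonempty, for all finite $t$; lifting gives the same for $\widehat{S}_t$.

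For (3), I would track the mod $2$ intersection number $\#(\gamma \cap \widehat{S}_t)$. After perturbing $\gamma$ with endpoints fixed so that it is transverse to $A^\epsilon$ and to each surgery neck (possible by a Sard-type argument, and not affecting the mod $2$ count), the parity of $\#(\gamma \cap \widehat{S}_t)$ between consecutive surgery times is determined by whether the endpoints $p \in A$ and $q \in B$ lie on the same component of the complement of $\widehat{S}_t$. By (1) the flow never crosses either endpoint, so this parity is constant between surgeries. Across each surgery the change is localized in a thin tubular neighborhood of a cylindrical neck transverse to $\gamma$: the annular piece contributing an even number of intersections is excised and replaced by two small cap disks (inside the inner osculating ball) that are disjoint from the transverse $\gamma$, so the parity is preserved. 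Starting from parity $1$ on $A^\epsilon$, one obtains $\widehat{S}_t \cap \gamma \neq \emptyset$ for all $t$. Since $\gamma$ is compact, a sequence of intersection points with $t_k \to \infty$ has a convergent subsequence, witnessing that the Hausdorff limit of $\widehat{S}_t$ is nonempty; projecting yields the same for $M_t$.

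The main obstacle will be the surgery-preservation step in (3): one must verify the cut-and-cap operation does not flip the parity of $\gamma \cap \widehat{S}_t$. This reduces to a local count in a thin cylindrical neighborhood of the neck, where for generic transverse $\gamma$ the removed annulus contributes evenly and the two small replacement caps contribute zero, so the local parity, and hence the global mod $2$ intersection count, is preserved.
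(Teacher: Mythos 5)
The main gap is in the surgery step of item (3) -- precisely the step you flag as the main obstacle, but your proposed local picture is not correct. You assert that across a surgery the only change is that a thin neck annulus (meeting $\gamma$ evenly) is excised and replaced by two small caps ``disjoint from the transverse $\gamma$.'' Neither claim holds. First, transversality cannot make $\gamma$ disjoint from the caps: a cap is a $2$-disc in a $3$-manifold, so a generic arc meets it in isolated points, not in the empty set; moreover you cannot know in advance where caps will be placed, nor re-perturb $\gamma$ at each of the (possibly infinitely many) surgery times while keeping its endpoints on $A$ and $B$ and its odd count with $A^\epsilon$. Second, a surgery discards much more than a neck annulus: entire high-curvature regions are thrown away -- convex ``horns,'' tubes with surgeries at both ends (four caps), and whole closed components -- and $\gamma$ may thread through these and through the surviving caps. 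The parity bookkeeping therefore requires the case analysis the paper carries out: localize to a neighborhood $U$ of the surgery regions, distinguish whether $\gamma$ meets each \emph{future surviving cap} an odd or even number of times, and in the double-neck case track the ``far opposing cap''; what saves the count is that the discarded, capped-off pieces are closed surfaces and so meet $\gamma$ evenly. In the universal cover there is an additional case your argument misses entirely: a closed tube downstairs may lift to a noncompact embedded cylinder, so the discarded piece is no longer closed and the evenness of its intersection with $\gamma$ needs a separate argument (the paper uses that $\gamma$ is a compact embedded arc with endpoints off the flow). Relatedly, ``fine enough surgery parameters'' is needed not only to keep caps inside osculating balls but so that surgery regions lie in evenly covered neighborhoods, hence lift to the standard local models and so that surgeries (and their lifts) only disconnect -- this is what makes the lifted flow out of $A$ well defined at all.

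Two secondary points. In (1) you invoke avoidance between $S_t$ and $M$ as if immediate, but both hypersurfaces are noncompact and the perturbation decays along the end, so the initial separation is not bounded below; the paper first uses shrinker mean convexity and the graphical, asymptotically conical structure in a large annulus to get a positive lower bound on the distance there on $[0,T]$, and only then applies the avoidance principle in the compact region (and lets $R\to\infty$ for the rest). Your route to (2) via Lemma \ref{plends} on finite time intervals is reasonable and differs harmlessly from the paper, which deduces (2) from (3); but the substantive content of the lemma -- the nonempty limit as $t\to\infty$ -- rests entirely on (3), so the surgery-parity gap above is the one that must be repaired.
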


\begin{pf}
We focus our discussion on a fixed surgery flow $S_t$ which implies the same facts for $M_t$ by theorem \ref{LSF}. Item (1) follows from the surgery flow $S_t$ being monotonically inward into its respective domain $R_{out, in}$ and the definition of $\widehat{S_t}$ in terms of lifts of $S_t$. Item (2) is clearly a consequence of (3) but we highlight it because of its importance. Considering item (3) then, in the following we consider throughout generic times when the intersection with $\gamma$ is transverse, or alternately slightly deform $\gamma$ as long as one always stays in a fixed neighborhood of the original curve.  We first note that when $\widehat{S_t}$ flows by the (lifted) smooth renormalized mean curvature flow that the mod 2 intersection number is preserved following the same proof that it is preserved under isotopy for two compact closed surfaces. To see this, noting that how the lifted flow was defined $\widehat{S_t}$ will always be embedded because $S_t$ is, this is because $\gamma$ is compact (intuitively, so that intersection points are not ``lost'' to spatial infinity), by (1) that $\widehat{S_t}$ is isolated from the endpoints of $\gamma$, and that $\widehat{S_t}$ is boundaryless. These facts force the spacetime track of the intersection points to be spatially bounded intervals or closed loops so that the mod 2 intersection number is preserved. It then remains to consider how the intersection number may change during surgery times.

\begin{rem} Alternately it seems such situations could possibly be avoided altogether by perturbing the curve $\gamma$ slightly because the singular set of $M_t$ is negligible but we deal with this possibility in a more direct fashion below. 
\end{rem}

To proceed, we must first describe in more detail what could unfold during a surgery time $t^*$ for the unlifted flow $S_t$. Modulo an extra case related to case (iii) this discussion applies equally to $S_t$ and $\widehat{S_t}$ as discussed shortly after:
$\medskip$

 If $S_{t^*}$ has high curvature everywhere, it is either i) convex, ii) close (in appropriate norm, see after remark 1.18 \cite{HK0}) to a tubular neighborhood of some open curve with convex caps, or iii) close to a tubular neighborhood of a closed curve. In these cases the surface is either a sphere or a torus. If there are low curvature regions on a connected component of $S_{t^*}$, then there are couple cases for the high curvature regions it may border. Considering a given high curvature region bordering a low curvature one, there will be a neck, which is a region where at every point after appropriate rescaling the surface is nearly cylindrical, which following along the direction of its axis away from the original low curvature region one will find either a) a convex cap or b) another low curvature component of surface. In the former case, there will be one surgery spot and in the latter there will be two on either side of the neck region and hence four caps will be placed, so that the capped off neck is topologically a sphere. This discussion is encapsulated in figure \ref{fig1}.
$\medskip$

\begin{figure}\label{fig1}
\centering
\includegraphics[scale = .65]{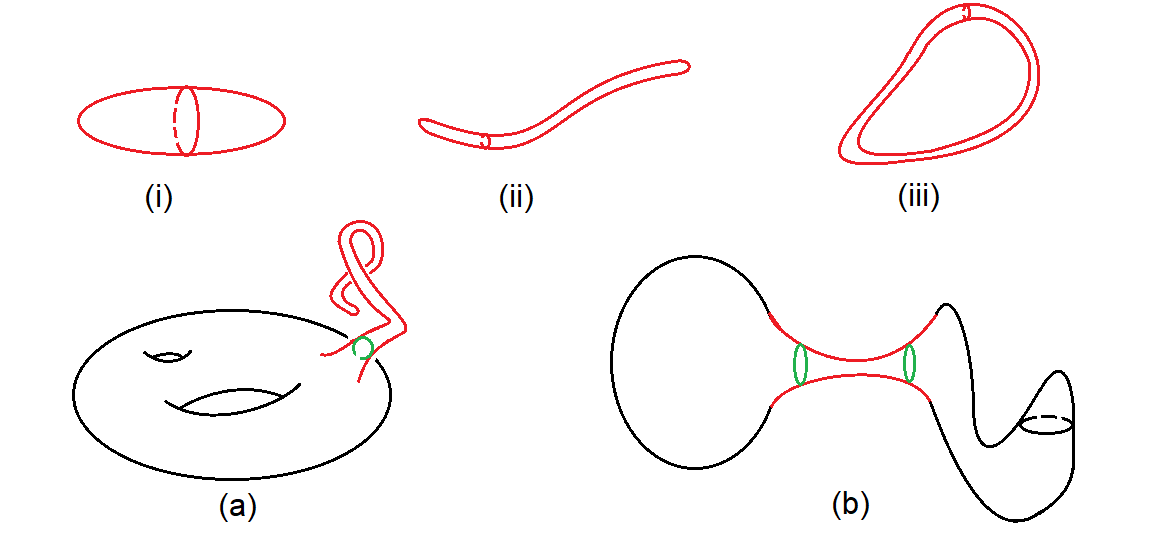}
\caption{A diagram displaying the possibilities one may encounter at a surgery time for a compact flow in $\R^3$. High curvature regions are in red and surgery spots are in green. Note in (b) there are two spots along the neck where surgery will be done giving two pairs of caps.}
\end{figure}

When the surgery parameters are sufficiently fine most of the above discussion applies to the lift of the surgery flows as well, because surgeries which occur at any given finite time take place in compact regions about the origin where as such points within will have a uniform lower bound on the diameter of their evenly covered neighborhood (or, using that generally any contractable open set in the base is evenly covered), so if the surgery parameters are fine enough/$F_{th,k}$ large enough, $k$ depending on the time interval under consideration, all local models -- necks, caps, high curvature convex regions discussed above will be lifted to necks, caps, and convex regions in the universal cover using the local models of each will be completely contained in such a neighborhood.
$\medskip$

Going to the cover there is possibly one extra case though: note that if case (iii) occurs along the surgery flow in the base then the bounded loop may concievably be lifted to an embedded cylinder in the cover, which we refer to below as case (iv). No other new cases can be ``added'' by the covering map since, following the central curves from one side of the neck region (if any) in the base in the other cases and considering then the lift, if a cap or low surgery region is arrived at in the base this must also be the case following the lift of the central curve in the cover. In other words, the lifts of these types of high curvature regions in the base correspond to the same cases passing to the cover although of course in the cover there may be many copies. 
$\medskip$

With this in mind, consider the very first time a surgery is performed. If $\gamma$ does not intersect any regions where surgeries are performed then there is nothing to do. Suppose now that $\gamma$ does intersect some surgery regions and denote by $U$ an open set which contains all (solid) surgery regions and such that $\gamma$ intersects $\widehat{S_t}$ in $U$ only in points affected by surgery; we see it will suffice to show the parity of the intersection number of $\gamma$ with $\widehat{S_t}$ in $U$ will be preserved across surgeries. Note that without loss of generality we may assume $\gamma$ always intersects $\widehat{S_t}$ transversely, even after surgeries and we will do so below. First suppose $\gamma$ intersects no \textit{future surviving caps}: caps placed by a surgery which border a low curvature region and, in particular, are not part of a component which will then be discarded; in other words $\gamma$ only intersects those components which will be discarded after caps are placed. Denoting by $D_i$ the high curvature regions precapping and $\overline{D_i}$ the high curvature regions postcapping if necessary to do so, since no future surviving caps are intersected $\gamma$ intersects $D_i$ in the same number of points as $\overline{D_i}$, each component of which is boundaryless. Since for cases (i)-(iii) the $\overline{D_i}$ are closed surfaces $\gamma$ intersects each of them in an even number of points. In the possible new case encountered in the cover, case (iv), this is also true because $\gamma$ is an embedded compact interval (note if this weren't the case, it could enter the neck and stay in the core, so that its intersection number was 1). On the other hand in this case $\gamma$ will not intersect $\widehat{S_t}$ in $U$ at all post surgery i.e. after also deleting the high curvature regions. Since the number of intersection points went down in $U$ by an even number, we are done in this case. 
$\medskip$

Now suppose $\gamma$ did intersect some future surviving caps. First we consider those it intersects an odd number of times and focus on one of them, which we'll call $C$. In this case, the other side of the future cap is either a high curvature region diffeomorphic to a sphere or another low curvature region, and we will refer to the high curvature regions discarded in these cases as either a ``horn'' or ``neck'' respectively (in pictures, (a) and (b) in the figure above). After perhaps redefining $U$ we can consider a component $V \subset U$ that is a contractible open set containing precisely the horn or neck in accordance with focusing on $C$, which we may arrange so that the boundary of the future surviving cap(s) lay on $\partial V$. Within this set $\gamma$ may have multiple connected components but it suffices to consider them one at a time by the basic fact that a sum of even numbers is even. So, below when we refer to $\gamma$ we mean to consider a (connected) subinterval of it in $V$ intersecting $C$. Finally, by a slight perturbation without loss of generality it intersects the cap in its interior as well.
$\medskip$

In the horn case the intersection number of $\gamma$ with $\widehat{S_t}$ in $V$ is odd. To see this note since $\gamma$'s intersection number with the future cap itself is odd eventually $\gamma$ goes through the core of the neck region of the horn and doesn't return to the region $R$ in $V$ bounded by $\partial V$ and the future cap (parameterizing the curve appropriately). Since the endpoint of $\gamma$ must not lay in $V$ the subinterval of it eventually meets $\partial V$ on the other side of the horn and the assertion follows. Denoting the horn region by $D$ and the post cap placement horn by $\overline{D}$, $\gamma$ will intersect $\overline{D}$ an even number of times since it is closed. But since $C$ is the cap placed opposite the horn and the intersection number of $\gamma$ with it is odd the number of intersection points in $V$ stays odd, preserving mod 2 intersection number in $V$ (and hence $U$). Applying the same argument for every such future surviving cap which came from a neck bordering a horn and using again a sum of even numbers is even covers this case, in that we checked the mod 2 intersection number is left unchanged.  
$\medskip$

 The neck case is the same if $\gamma$ intersects it at only one future surviving cap, but there is also the case $\gamma$ intersects both bordering caps. In this case there will be two relevant pairs of surgery caps placed, a pair associated to $C$ (where the other cap in the pair is part of a neck which is thrown out) and a ``far pair'' on the opposing low curvature region: the cap on the far pair which survives surgery we'll call the \textit{far opposing cap} $C'$. In (b) in the figure above labeling the four caps 1 through 4 from left to right if $C$ were ``1'' then $C'$ would be ``4.''
$\medskip$

In the case $\gamma$ intersects both $C$ and $C'$ there are two cases: it intersects the neck an odd number of times and $C'$ an even number of times, or vice versa: it can't intersect both an odd number of times or else $\gamma$ would  have a triple junction (i.e. a ``Y'') and it can't intersect both an even number of times because it has an odd intersection number with $C$ and it is boundaryless in $V$: in this case an endpoint would lay within the core of the neck but the flow is strictly separated from $A$ or $B$ as used before. In the first case the mod $2$ intersection number of $\gamma$ with $\widehat{S_t}$ in $V$ presurgery is odd and in the second case even. In the first case the intersection number across $C$ postsurgery is odd, across $C'$ even, so the mod 2 intersection number is preserved. In the second case $\gamma$ will intersect both $C$ and $C'$ an odd number of times, so the total intersection number in $V$ postsurgery is even again preserving intersection parity. Again one applies this same argument at every such cap $C$. 
$\medskip$

Now consider those future surviving caps which (a connected component in $U$) of $\gamma$ intersects an even number of times, again focusing on one of them and denoting it again by $C$. In the horn case, $\gamma$ must intersect $D$ an even number of times because it intersects $C$ an even number of times (so every time $\gamma$ goes past $C$ into $D$, it must come back), so as before the mod 2 intersection number is preserved. In the neck case, one can see arguing as above that $\gamma$ much intersect the far opposing cap and the neck both an even or odd number of times since it intersects $C$ an even number of times, giving that the mod 2 intersection number is again preserved under surgery like before. As above then one applies this at every such cap $C$. Repeating the argument for subsequent surgery times gives us that for any such $\gamma$ $\widehat{S}_t$ will always intersect $\gamma$ in an odd number of points.  \end{pf}

Since the domains $\widehat{R_{out, in}}$ are simply connected and $A^\epsilon$ is a graph over $A$, such a $\gamma$ certainly exists as in the statement of item (3). Note that in the lift any curve from $A$ to itself will intersect $A^\epsilon$ an even number of times so (3) is indeed particular to our case, in that we needed two different boundary components in the lift. For a concrete example, the universal cover of $\R^3 \setminus B^3$ is simply connected and, taking the boundary of the ball to be the shrinking sphere, any outward perturbation will flow away to infinity under the RMCF. Denote by $N$ the stable self shrinker we obtain from theorem \ref{minlimit}. Since $M$ was proper, by Ding and Xin (theorem \ref{proper} above) it had polynomial volume growth and hence finite entropy, so that the entropy decreasing perturbation $M^\epsilon$ does as well; more elementarily it has polynomial volume growth because it has a single conical end. By the monotonicity of entropy for nonfattening level set flow this implies $N$ does as well, which in turn gives $N$ has polynomial volume growth. In particular it must not be a stable shrinker, giving a contradiction and showing $\pi_1$ surjectivity holds with respect to both the inner and outer components of $M$. Since $M$ is asymptotically conical we obtain theorem \ref{thm:unknotted} in this case as indicated in lemma \ref{importantrem}. Alternatively one could apply the Frankel theorem to $N$ and $M$ to gain a contradiction, since they must be disjoint by the set monotonicity (item (1) of theorem \ref{LSF}) of the flow.

\subsection{Revisiting the compact case.} 
$\medskip$

To conclude we discuss how to reproduce the unknottedness theorem for compact self shrinkers more in line with the technique above. First note in this case we may appeal straight to Waldhausen's theorem for Heegaard splitings of $S^3$ after one point compactifying $\R^3$ as discussed in \cite{MW1}, note the isotopy can be arranged to avoid $\{\infty\}$ and hence gives rise to a bounded isotopy in $\R^3$. Using the first eigenfunction of the Jacobi operator as above to get a shrinker mean convex perturbation (for the compact case, see lemma 1.2 in \cite{CIMW}) to then flow; the corresponding flow $M_t$ is then constructed exactly as above --surgery in the compact case is easier from a technical viewpoint because the surgery need not be localized. Similarly the convergence to level set flow and that it is a Brakke flow is already known from previous work. We see at no point did we use the noncompactness of $M$ in the proof of lemma \ref{props}, so we get a nonempty albeit possibly noncompact stable shrinker $N$ in the limit from which we derive a contradiction as before. Alternately, using the compactness assumption, one may also derive a contradiction by theorem 7 in \cite{HW} which says the flow of the perturbation must clear out by a distance estimate.
$\medskip$

\section{Concluding remarks}
We begin our discussion at unknottedness theorems for classical minimal surfaces, which will lead naturally into the other topics mentioned in the introduction. Theorem \ref{thm:unknotted} is very much in the spirit of the various works by, for instance, Freedman, Frohman, Meeks, and Yau on classical minimal surfaces in $\R^3$ -- see the papers \cite{Free, fr, fm, fm2, Meeks, MY}; their papers give an essentially complete answer to the type of question under consideration here for classical minimal surfaces, although the arguments in these papers do not seem to obviously carry over to our setting as we explain. The paper most relevent to our present situation is that of Meeks \cite{Meeks}, where he shows the following:
\begin{thm} Suppose $\langle \cdot, \cdot \rangle$ is a complete metric on $\R^3$ with non-positive sectional curvature. Let $M$ be a complete proper embedded minimal surface in $\R^3$ which is diffeomorphic to a compact surface punctured in a finite number of points. Then
\begin{enumerate}
\item If $M$ has one end, then $M$ is standardly embedded in $\R^3$. In particular, two such simply connected examples are isotopic. 
\item If $M$ is diffeomorphic to an annulus, then $M$ is isotopic to the catenoid. 
\end{enumerate} 
\end{thm}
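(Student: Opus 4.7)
The plan is to verify the $\pi_1$-surjectivity criterion from Lemma \ref{equiv} for both regions $R_{out}$ and $R_{in}$ bounded by $M$, and then invoke the noncompact Waldhausen-type unknottedness theorem of Frohman and Meeks (Theorem \ref{unknot}). The nonpositive sectional curvature hypothesis enters decisively through the solvability of Plateau's problem by embedded minimal disks in Hadamard $3$-manifolds, which is the content of the Meeks-Yau embeddedness result \cite{MY} on which this proof hinges.

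First, given a class $[\gamma] \in \pi_1(R_{out}, p)$ with basepoint $p \in M$, I represent $\gamma$ by a smooth embedded loop (using that $\R^3$ is simply connected in conjunction with a transversality perturbation) and solve Plateau's problem to obtain an embedded least-area disk $D$ with $\partial D = \gamma$; Meeks-Yau regularity in nonpositive curvature guarantees $D$ is embedded. Because $D$ is minimal and $M$ is minimal, any interior tangency would force local coincidence by the strong maximum principle, which is impossible since $\partial D \subset R_{out}$ while $M$ is a complete minimal surface. Hence $D \cap M$ is a disjoint union of transverse embedded arcs and circles. I then run the standard innermost-disk/outermost-arc exchange surgery along components of $D \cap M$: each innermost intersection circle bounds a subdisk of $D$ which can be swapped for a parallel disk on $M$ (using two-sidedness of $M$), strictly reducing the intersection count. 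Since $D$ is compact this terminates in finitely many steps, yielding a disk homotopy of $\gamma$ into $M$ and establishing $\iota_\ast \pi_1(M) \twoheadrightarrow \pi_1(R_{out})$. The same argument with the opposite normal produces surjectivity onto $\pi_1(R_{in})$, so by Lemma \ref{equiv}, $M$ is a Heegaard surface in the sense used by Frohman and Meeks. Since $M$ is a complete proper embedded surface diffeomorphic to a punctured compact surface with one end, its end is topologically standard (a cylinder over a circle); Theorem \ref{unknot} then yields (1). Part (2) follows the same strategy with two ends (an annulus has two ends), and the isotopy to the catenoid is obtained by combining standard embeddedness as an unknotted annulus with a deformation argument through a one-parameter family of nonpositively curved metrics interpolating to the Euclidean one, during which the isotopy class is preserved by openness.

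The main obstacle I expect is controlling the innermost-circle surgery in step two so that it terminates and so that each exchange remains an embedded disk homotopy; classically this is handled by a complexity argument in Meeks-Yau, but in our noncompact setting one additionally needs to confine the entire procedure to a compact region, because intersection components could in principle escape to infinity (a phenomenon flagged in Section 3 of the present paper). The punctured-compact-surface hypothesis on $M$ provides the needed compactness: outside a sufficiently large ball $M$ is topologically a union of half-cylinders, so both the representative of $\gamma$ and the Plateau solution may be taken to lie in a compact region after homotoping the free end of $\gamma$ radially. A secondary technical point for part (2) is that ``the catenoid'' is naturally a Euclidean object, so genuinely isotoping to it in a general nonpositively curved metric requires the deformation argument mentioned above rather than a direct uniqueness statement for complete embedded minimal annuli in $(\R^3, \langle \cdot, \cdot \rangle)$.
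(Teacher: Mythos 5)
Your plan runs into trouble at its very first and second steps, and the failure is structural rather than technical. First, an arbitrary loop $\gamma$ in $R_{out}$ need not bound \emph{any} embedded disk in $\R^3$: if $\gamma$ is knotted there is no embedded spanning disk at all, and the Meeks--Yau embeddedness theorems you invoke do not apply to arbitrary loops in a Hadamard manifold --- they require special boundary configurations (extremal curves, curves on the boundary of a mean-convex region), so ``solve Plateau and get an embedded $D$'' is unjustified. Second, the innermost-circle exchange is not available: an innermost circle $c$ of $D\cap M$ bounds a subdisk of $D$, but there is no reason for $c$ to bound a disk on $M$ --- such circles are typically essential in $M$ (this is exactly the compressibility of a Heegaard-type surface into its complementary regions), so ``swapped for a parallel disk on $M$'' has no justification. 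Worse, if the procedure did terminate you would obtain a disk with boundary $\gamma$ and interior disjoint from $M$, hence contained in $\overline{R_{out}}$; applied to every $\gamma$ this would prove $\pi_1(R_{out})=1$, which is already false for the standard one-ended genus-one surface (both complementary regions have infinite cyclic fundamental group). Surjectivity of $\iota_*:\pi_1(M)\to\pi_1(R_{out})$ asks that loops be homotopic \emph{into} $M$, not null-homotopic, and the disk-exchange mechanics you describe do not produce such a homotopy. (Smaller points: the tangency discussion is off, since a least-area disk and $M$ can touch tangentially and cross --- the strong maximum principle only excludes one-sided touching; and with basepoint $p\in M$ your $D\cap M$ contains arcs, not only circles.)

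For comparison, this statement is quoted from Meeks \cite{Meeks}, and the proof indicated there (and sketched in the concluding remarks of this paper) is Morse-theoretic rather than Plateau-theoretic: by the Gauss equation a minimal surface in a nonpositively curved ambient space has nonpositive Gaussian curvature, so the relevant height/distance functions restricted to $M$ have no index-two critical points; Morse theory then shows the complementary regions are handlebodies, and one concludes with a Waldhausen-type theorem (Frohman--Meeks in the one-ended case). If you want to argue via least-area objects instead, you need a genuinely different mechanism (tower constructions as in Meeks--Yau, or Meeks--Simon--Yau minimization in an isotopy class), and in any case passing from ``properly embedded punctured compact surface'' to a Heegaard surface in the sense of Frohman--Meeks requires controlling the behavior of the end at infinity --- a substantial part of their paper --- not merely observing that the end is an annulus. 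Finally, in part (2) the deformation through metrics is a non sequitur: being isotopic to the catenoid is a purely topological statement about the embedded annulus in $\R^3$ and does not depend on the ambient metric; what needs proof is unknottedness of the annulus and standardness of its two ends.
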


Note that item (2) in the shrinker context is essentially covered by Brendle in Theorem 2 of \cite{Bren}, mentioned already in the introduction.
$\medskip$

Item (1) has a Morse theoretic proof, where the nonpositive sectional curvature enters via Gauss formula to see that the Gaussian curvature of a minimal surface at any point on $M$ must be negative; this implies the height functions involved have no critical points of index 2 which allows Meeks to show minimal surfaces must bound handlebodies in many situations, allowing him to reduce again to a Waldhausen-type theorem in the case of one end as above. 
$\medskip$

Even ignoring the incompleteness of the Gaussian metric, by calculations of Colding and Minicozzi in \cite{CM} the scalar curvature of the Gaussian metric does not have a sign so neither do the sectional curvatures in this metric, as discussed in section 2 although the regions where the scalar curvature is positive and negative are clearly ``simple'' in that the region where it is positive is a ball (of radius $2\sqrt{\frac{n^2 + n}{n-1}}$). However the examples of P. Hall \cite{Hall} seem to rule out decomposing the surface into different ambient curvature regimes and applying different arguments in each because these boundaries may have multiple boundary components. On the other hand, in contrast to the examples of P. Hall, we do note the subsequent paper of Meeks and Yau \cite{MY}, on complete minimal surfaces with finite topology and multiple ends, reduces to the one ended case in a way which sidesteps any possible pathological behavior -- the fact that the minimal surfaces considered are complete is vital. This suggests our result could possibly be extended to the case of shrinkers with multiple ends, or that perhaps a decomposition indicated in the above paragraph was actually workable. However to arrive at this reduction they use that for classical minimal surfaces with finite topology the ends are parallel (aside from their paper see \cite{HM}), which would need to be checked to prove the analogous statement of their theorem for shrinkers, at least if their approach was followed closely.
$\medskip$

When the metric is Euclidean the main result is also a corollary of Theorem 2 in the same paper of Meeks, where it is shown that minimal surfaces of the same genus in a mean convex ball sharing the same connected (as in, single) boundary component are isotopic to each other and furthermore standard; see also Theorem 3.1 in his paper with Frohman \cite{fm}. The proofs in either go by showing, after making the same reduction to boundary components as above, that nonflat minimal surfaces in a domain with mean convex boundary must intersect by a moving plane argument in the first or as in the proof of the halfspace theorem of Hoffman and Meeks \cite{HM1} in the second. The moving plane argument doesn't apply in our setting; but the Frankel theorem for $f$-Ricci positive metrics, which the Gaussian metric is, could provided the boundary of the ball under consideration is mean convex. See theorem 6.4 in \cite{WW}. There might problems due to possible noncompactness in the lift -- although one might hope this plays out better than in the Frankel theorem below in terms of conditions needed since the Gaussian metric restricted on finite domains has bounded curvature. The main issue with this seems to be that spheres of large (Euclidean) radius, or more general convex sets of large in-radius, are not mean convex in the Gaussian metric: the ball considered might have to in fact be quite large in the proof because it is picked so that the minimal surface under consideration consists of $k$ annuli in its complement (here, $k =1$). And indeed one can see suitably large domains, in particular those strictly containing the round shrinking sphere, will never be shrinker mean convex at all points of their boundary in the Gaussian metric using the flow and maximum principle, ruling out more clever design of domains. A possible way to deal with this might be, upon intersecting with the ball, also to perturb the function $f$ to make the boundary mean convex: even granting this such a perturbation would also require positivity in the Hessian of $f$ and it was not immediately clear this could be guaranteed to the author at the time of this writing, although perhaps this can be arranged by a more careful examination of the perturbation function one would use. 
$\medskip$

Reducing down to surfaces with boundary is not strictly necessary of course, and more in line with Lawson's original argument in \cite{L} one might ask if a Frenkel theorem could be applied directly in the noncompact setting to rule out there being two boundary components $A$ and $B$ of $\widehat{R_{out,in}}$ discussed in the proof of theorem \ref{thm:unknotted}. Wei and Wylie's Frankel property for general $f$-Ricci positive metrics doesn't apply in this case because it requires $f$ be bounded, which it isn't in our setting if we don't consider subdomains with boundary. This leaves to the authors knowledge the following two statements to consider applying, the first due to Impera, Pigola, and Rimoldi and the second very recent one due to Chodosh, Choi, Mantoulidis, and Schulze (the author thanks A. Sun for this reference): 
\begin{thm}[Theorem B in \cite{IPR}] Let $\Sigma_1^m$ and $\Sigma_2^m$ be properly embedded connected self-shrinkers in the Euclidean space $\R^{m+1}$. Assume that $\Sigma_2$ has a uniform regular normal neighborhood $\mathcal{T}(\Sigma_2)$. If 
\begin{equation}
\liminf\limits_{|z| \to \infty, z \in \Sigma_2} \frac{\text{dist}_{\R^{m+1}}(z, \Sigma_1)}{e^{-b|z|^2}\mathcal{P}(|z|)^{-1}} > 0
\end{equation}
for some polynomial $\mathcal{P} \in \R[t]$ and some constant $0 \leq b < \frac{1}{2}$, then $\Sigma_1 \cap \Sigma_2 \neq \emptyset$.
\end{thm}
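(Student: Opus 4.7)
My plan is to argue by contradiction, so assume $\Sigma_1 \cap \Sigma_2 = \emptyset$. Since $\Sigma_2$ admits a uniform tubular neighborhood $\mathcal{T}(\Sigma_2)$ and the hypothesis asserts that $\dist(z,\Sigma_1)$ is bounded below by a positive quantity at every point, after shrinking the radius of $\mathcal{T}(\Sigma_2)$ if necessary I would first check that the portion of $\Sigma_1$ contained in $\mathcal{T}(\Sigma_2)$ is a normal graph over $\Sigma_2$; if instead $\Sigma_1 \cap \mathcal{T}(\Sigma_2) = \emptyset$ then the two shrinkers stay uniformly separated everywhere, and comparison with a large concentric shrinking sphere placed between them (or equivalently a direct application of Ecker--Huisken) gives an immediate contradiction. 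So one reduces to having a graphing function $u:\Omega\subset \Sigma_2\to (0,\infty)$.

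The key computation is then to derive a Jacobi-type elliptic inequality for $u$. Writing $H_{\Sigma_1}$ in terms of $u$ and the geometry of $\Sigma_2$ and subtracting the two shrinker equations $H=\langle X,\nu\rangle/2$, the linearization takes the form
\[
\Delta_{\Sigma_2} u - \tfrac{1}{2}\langle X, \nabla^{\Sigma_2} u\rangle + \bigl(|A_{\Sigma_2}|^2 + \tfrac{1}{2}\bigr) u = Q(u,\nabla u,\nabla^2 u),
\]
where $Q$ is a higher-order error vanishing with $u$ and its derivatives. The operator on the left is precisely the Jacobi operator of $\Sigma_2$ in the Gaussian metric, and the decisive coefficient $+\tfrac{1}{2}$ is the very instability term emphasized in Section 2. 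Because $\Sigma_2$ has at most Euclidean volume growth by Theorem \ref{proper}, the drift Laplacian $\Delta - \tfrac{1}{2}\langle X,\nabla(\cdot)\rangle$ on $\Sigma_2$ is stochastically complete and a weak Omori--Yau principle at infinity applies. Applied to $u$ it furnishes a sequence $p_k\in\Sigma_2$ with $u(p_k)\to \inf_{\Sigma_2} u$, $|\nabla u(p_k)|\to 0$, and $\Delta u(p_k)\ge -\epsilon_k$; the sign of the zero-order term forces $\inf u = 0$, and since $\Sigma_2$ is proper and $\Sigma_1\cap\Sigma_2=\emptyset$, the minimizing sequence must escape to spatial infinity, i.e. $|p_k|\to\infty$.

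The main obstacle, and the place where the sharp threshold $b < \tfrac{1}{2}$ is needed, is converting this pointwise information at infinity into a genuine contradiction using the quantitative lower bound $u(z)\ge c\,e^{-b|z|^2}\mathcal{P}(|z|)^{-1}$. My intended route is to multiply the inequality above by $u\,e^{-|X|^2/4}$ and integrate by parts on a large exhaustion of $\Sigma_2$: the weighted Bochner identity together with the shrinker equation cancels the drift terms, and what remains is a positive bulk contribution from the $(|A|^2+\tfrac{1}{2})u^2$ term against a boundary flux. The gap $\tfrac{1}{2}-b > 0$ combined with Euclidean volume growth makes $u^2 e^{-|X|^2/4}$ integrable and controls the boundary flux, so the bulk cannot be absorbed, giving the desired contradiction. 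The polynomial $\mathcal{P}$ is harmless because it is dwarfed by any positive power of the Gaussian weight gap. The case $b=\tfrac{1}{2}$ is genuinely borderline, corresponding to graphical perturbations of $\Sigma_2$ along first Jacobi eigenfunctions of its asymptotic cone (exactly the type used by Bernstein--Wang in constructing $M^\epsilon$ in Section 4), which is both why the hypothesis is sharp and why the elliptic approach cannot be pushed past it without substantially finer weighted estimates.
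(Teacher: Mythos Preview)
This theorem is not proved in the paper; it is quoted verbatim from \cite{IPR} in the concluding remarks as background for discussing which Frankel-type results are available for self-shrinkers. The paper only \emph{cites} the statement and then explains why it cannot be applied directly in the lifted setting (because the lifts $A$, $B$ of $M$ in $\widehat{R_{out,in}}$ need not be proper or have polynomial volume growth). There is therefore no proof in the paper to compare your proposal against.

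That said, your sketch is broadly in the right spirit for how one attacks such a result: disjointness forces part of $\Sigma_1$ to be a graph $u$ over $\Sigma_2$ inside the tubular neighborhood, the graph function satisfies a drift Jacobi-type equation, and one exploits stochastic completeness / a weak maximum principle at infinity for the drift Laplacian (which holds because proper shrinkers have Euclidean volume growth by Ding--Xin) together with the weighted decay assumption to derive a contradiction. Two caveats are worth flagging. First, your ``uniform separation'' branch is not quite right: if $\Sigma_1 \cap \mathcal{T}(\Sigma_2) = \emptyset$ one cannot simply slip a large shrinking sphere ``between'' them, since both shrinkers pass through the origin under the flow; one needs a genuine Frankel argument or the barrier techniques in \cite{IPR}. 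Second, in the integration-by-parts step the boundary flux control is more delicate than you indicate, because you only know $u$ is bounded \emph{below} by $e^{-b|z|^2}\mathcal{P}(|z|)^{-1}$, not above, so integrability of $u^2 e^{-|X|^2/4}$ is not automatic; the actual argument in \cite{IPR} proceeds via the Omori--Yau route rather than an energy estimate. If you want the full details you should consult \cite{IPR} directly.
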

In the following, $F$-stationary means stationary with respect to Gaussian area: 

\begin{thm}[Corollary C.4 in \cite{CCMS}] If $V, V'$ are $F$-stationary varifolds, then supp $V \cap$ supp $V' \neq \emptyset$.
\end{thm}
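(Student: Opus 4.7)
The plan is to carry out a Frankel-type second-variation argument in the Bakry-\'Emery framework natural to the Gaussian metric. Equipping $\R^{n+1}$ with the weight $e^{-f}$, $f(x)=|x|^2/4$, gives a Bakry-\'Emery Ricci positive setting,
\[
\Ric_f := \Ric_{\mathrm{Eucl}} + \Hess f = \tfrac{1}{2}\, g_{\mathrm{Eucl}} > 0,
\]
in which an $F$-stationary varifold is exactly a stationary varifold for the weighted mass $\int e^{-f}\, d\mu$. So the statement is a Frankel property for $f$-minimal varifolds in an $f$-Ricci positive (but incomplete, with unbounded $f$) background, lying just outside the scope of Wei-Wylie's Frankel theorem (theorem 6.4 of \cite{WW}); the additional structure of $F$-stationarity must be exploited to handle the unboundedness of $f$.

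Assume for contradiction $\supp V \cap \supp V' = \emptyset$. The first step is to exhibit a pair $(p,q) \in \supp V \times \supp V'$ attaining the Euclidean distance between the supports. The key point is that the Huisken-type monotonicity formula for $F$-stationary varifolds, combined with the uniform positive density lower bound at every point of the support, forces minimizing sequences for $|x-y|$ to stay in a bounded Euclidean region (a varifold with support escaping to infinity along a fixed direction would accumulate excessive $F$-mass against the Gaussian weight). Allard's regularity theorem then ensures both supports have open, dense, smooth self-shrinker regular parts, so after an arbitrarily small perturbation of the minimizing pair one may assume $p$ and $q$ are regular points.

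Let $\gamma:[0,L]\to\R^{n+1}$ be the Euclidean straight segment from $p$ to $q$. Minimality of $|p-q|$ forces $\gamma'(0)\perp T_pV$ and $\gamma'(L)\perp T_qV'$. Choose an orthonormal basis $\{e_i\}_{i=1}^n$ of $T_pV$ and parallel-translate along $\gamma$ (trivially in the flat background). Taking the $f$-weighted second variation of length for each variation by $e_i$ and summing in $i$, the interior term controlled by $\Ric_f$ integrates to a strictly negative $-\tfrac{n}{2}L$, while the boundary contributions reduce to sums of the weighted (equivalently Gaussian) mean curvatures of $V$ at $p$ and of $V'$ at $q$, both vanishing by $F$-stationarity. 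This yields a strictly negative total second variation, contradicting the minimality of $\gamma$.

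The main obstacle is producing a regular minimizing pair when the varifolds may have bad singular sets; if $p$ or $q$ must be chosen from a singular part, the second-variation step becomes delicate to interpret pointwise. One handles this by working with a sequence of nearby pairs in the smooth regular parts (dense by Allard) and passing to the limit using rectifiability and approximate tangent planes of the varifold, in the spirit of the varifold arguments used throughout \cite{CCMS}. The rapid decay of the Gaussian weight means the essential analysis is confined to a bounded region, so noncompactness poses no further serious issue once the minimizing pair is in hand.
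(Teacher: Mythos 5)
Your formal second-variation computation is fine as far as it goes (in the weighted Euclidean setting $\Ric_f=\tfrac12 g>0$, and the boundary mean-curvature terms cancel against the $\Hess f$ integral precisely because both varifolds are $F$-stationary and the segment meets them orthogonally), but the step on which the whole argument rests --- the existence of a pair $(p,q)$ attaining the distance between the supports --- is not established, and the justification you give for it does not hold up. Planes, cylinders and asymptotically conical shrinkers all have unbounded support with perfectly finite Gaussian mass, so nothing in Huisken-type monotonicity or in density lower bounds prevents a minimizing sequence for $|x-y|$ from escaping to infinity; for two disjoint noncompact (possibly non-proper) supports the infimum of the distance can be zero, approached only at spatial infinity, in which case there is no minimizing segment at all and the second-variation step never starts. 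This is exactly the known obstruction in the elliptic approach: it is why Wei--Wylie's Frankel theorem \cite{WW} requires $f$ bounded and why Impera--Pigola--Rimoldi \cite{IPR} must impose the quantitative lower bound on $\mathrm{dist}(z,\Sigma_1)$ against $e^{-b|z|^2}\mathcal{P}(|z|)^{-1}$ quoted in the paper. Without some substitute for that hypothesis your argument proves only a conditional Frankel property, not Corollary C.4 of \cite{CCMS}. (A secondary issue: perturbing a minimizing pair to nearby regular points destroys the first-order orthogonality conditions, so the ``arbitrarily small perturbation'' needs an almost-minimizing, almost-orthogonal quantitative version of the index argument; this is fixable but not automatic.)

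The actual proof is parabolic rather than elliptic, and that is how it evades the noncompactness: to each $F$-stationary varifold one associates the self-similarly shrinking weak (Brakke/set-theoretic) flow $t\mapsto\sqrt{-t}\,\mathrm{supp}\,V$ on $[-1,0)$; every fixed point of either support is carried to the origin as $t\to 0^-$, so the two flows come together near $0$, while a localized avoidance principle for possibly noncompact, non-proper weak flows (the content of Appendix C of \cite{CCMS}) forbids the distance between initially disjoint flows from decreasing. The localization in the avoidance principle is what replaces the missing compactness, and no properness, polynomial volume growth, or distance-decay hypothesis is needed --- this is the ``shrinkers collapse onto the origin'' argument described in the concluding remarks of the paper. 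If you want to salvage the elliptic route you would need to add an input that either forces the distance to be attained in a bounded region or handles the case $\inf\mathrm{dist}=0$ at infinity, which is essentially the extra hypothesis of \cite{IPR}.
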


In the first statment above properness enters because for self shrinkers in $\R^m$ it guarantees polynomial volume growth by the result of Ding and Xin \cite{DX}. The issue though is that there are cases where the universal cover of a bounded (and hence polynomial volume growth) surface, such as surfaces of genus greater than 2, has exponential volume growth, so it is not obvious that the first statement can be applied in the lift to the boundary components. This is a purely topological phenemonon in fact and hence the volume growth theorem of Ding and Xin couldn't pass to the universal cover. In fact, the boundary components could be stable for the same reason; in general it is known that the spectrum of the Laplacian (and imaginably more general elliptic operators, such as Jacobi operators) may decrease upon lifting to universal cover, unless the fundamental group is amenable: see Brooks \cite{Brooks} -- the author thanks R. Unger for bringing this paper to his attention. We point out here this was a detail overlooked in \cite{MW1} (particularly claim 2.1) which can be fixed in that argument by lifting a perturbation by eigenfunction of the Jacobi operator to the universal cover.
$\medskip$

Now there are cases that stable minimal surfaces can be classified irrespective of volume growth in more general ambient manifolds than $\R^{m}$ which one might hope circumvent this issue, for instance in positive scalar curvature, but to the author's knowledge the best result that applies seems to just be that a stable minimal surface in the lifted Gaussian metric must be locally quadratic in a quantitative sense, see \cite{CM4}. This doesn't imply stronger results certainly can't be true for the lifted Gaussian metric, of course. 
$\medskip$

The proof of the second statement above uses the fact that shrinkers ``collapse'' onto the origin in $\R^{n+1}$. In the case of two smooth self shrinkers where one is compact its a simple consequence of the avoidance principle: the distance between them must not decrease but on the other hand they both shrink to the origin after one ``second'' under the flow The proof of the more general statement is an elaboration on this using Ilmanen's localized avoidance principle. However, this argument doesn't seem to apply when passing to coverings so does not seem to apply to the lifts $A$ and $B$ of $M$ discussed in section 4. Indeed it is true they should never intersect, the issue is that there seems to be no good reason that their flows should approach a common point in contrast to shrinkers in $\R^3$ because the origin could be the preimage of many points in the universal cover. For example if $M$ is a self shrinker where the origin lays within the region $R_{out}$ bounded by $M$ but the lift of $R_{out}$ has two connected components $A$ and $B$, it seems that $A$ and $B$ should retreat from each other, and in so doing not giving a contradiction, because their corresponding (lifts of) origin(s) are ``behind'' $A$ and $B$. 
$\medskip$

Instead of working entirely in the universal cover, the next thing one might try, inspired by the ideas above, could be to find a minimal surface in the universal cover proceeding as in the author's joint paper with S. Wang \cite{MW1} and then project it back down to $\R^3$ and use a Frankel theorem there. In the compact case one can deal with the incompleteness of the Gaussian metric by an intermediate perturbation argument as in \cite{MW1} or Brendle's paper \cite{Bren} to then find a stable minimal surface $\widehat{N}$ in the Gaussian metric in $\widehat{R_{out, in}}$ by solving a sequence of Plateau problems over domains exhausting a connected component  -- note since it might concievably not have polynomial volume growth this itself does not give a contradiction, as mistakenly claimed by the author as an aside in his thesis \cite{Mra1}. The noncompact case is more delicate since $M$ will always have points which lay in the ``perturbed region'' but by chosing the perturbations correctly as Brendle does in proposition 12 of his paper the same construction seems to work in this case as well. At any rate, note that a $\widehat{N}$ found by such means is concievably not equivariant under deck transformations (in the exhaustion of $A$, none of the domains would be equivariant -- although perhaps if one cound pick the exhaustion to be comprised of whole fundamental domains/''tiles'' of $A$ it seems plausible the limit minimal surface obtained might be equivariant) so in fact might be sent by the covering map to something that is at least intrinsically smooth but nonproper. This possible nonproperness seems to give technical issues, at least in the noncompact case where the distance between the two shrinkers could possibly be zero; in the compact case, or where there is positive distance between the two shrinkers, it seems to be fine arguing as in the author's thesis \cite{Mra1} by slightly ``tilting'' the compact one to use the classical avoidance principle -- this is another issue overlooked in \cite{MW1}.  This certainly rules out immediately invoking the result of Impera, Pigola, and Rimoldi, and it also seems to rule out invoking at least without care the Frankel property of Chodosh, Choi, Mantoulidis, and Schulze because there are local area boundedness assumptions in most of the literature on Brakke and level set flows which may affect, for instance, arguments where the Brakke inequality is applied. Of course, once we obtain the self shrinker we do from the flow, then their Frankel theorem can be applied. 
$\medskip$

To avoid these issues of potentional large volume growth, nonproperness, and nonequivariance then, it seems most expedient to work in the base as much as possible when appealing to the geometric facts at hand. The flow argument given above essentially does this, where we go to the universal cover only to check that the flow never becomes nonempty. An alternate path may be to appeal to \cite{MSY}, where Meeks, Simon, and Yau developed a Plateau problem type approach to minimizing in an isotopy class -- specifically see sections 4 and 6 of their paper. The issue of incompleteness of the Gaussian metric still remains though and bounded geometry is assumed in their work but because the incompleteness of the Gaussian metric is ``at infinity,'' one might apply Meeks--Simon--Yau in compact exhaustions of $R_{out, in}$, with the metric perturbed near the boundary of these sets as in Brendle. Since the Gaussian metric has bounded curvature on any compact subset, standard curvature estimates should then allow one to pass to a stable limit: indeed, this is essentially how the noncompact case is handled in section 6 in their paper. The limit, provided its nondegenerate which could concievably happen, should be nonempty by an intersection number argument in the universal cover similar to above, should be stable as a limit of stable minimal surfaces and should have polynomial volume growth by a comparison argument. Of course, merely knowing the limit is disjoint from $M$ and satisfies the properness conditions discussed above suffices to garner a contradiction by the Frankel theorem. It seems perturbations by eigenfunctions wouldn't be necessary in this approach, so the full statement in the case of cylindrical ends might also be attained this way. 
$\medskip$

It might be interesting to note that the renormalized mean curvature flow argument deals with the incompleteness of the Gaussian metric in a way entirely different from perturbing the metric, where by instead of flowing by the mean curvature flow in the Gaussian metric the ``worst'' part of the speed function is discarded. Besides potentially being of independent interest, we also note the flow method is also well suited to studying shrinkers under an entropy assumption in the spirit of \cite{CIMW, BW} because the entropy is monotone.

\end{document}